\journal{}
\begin{document}

	\newtheorem{theorem}{Theorem}[section]
	\newtheorem{lemma}[theorem]{Lemma}
	\newtheorem{corollary}[theorem]{Corollary}
	\newtheorem{proposition}[theorem]{Proposition}
	\newtheorem{fact}[theorem]{Fact}
	\newtheorem{observation}[theorem]{Observation}
	\newtheorem{claim}[theorem]{Claim}
	\newtheorem{definition}[theorem]{Definition}
	\newtheorem{conjecture}[theorem]{Conjecture}
	\newtheorem{problem}[theorem]{Problem}
	\newtheorem{remark}[theorem]{Remark}
	\newtheorem{example}[theorem]{Example}
	
	\newcommand{\mex}{{\rm mex}}
	\newcommand{\gex}{{\rm gex}}
	\newcommand{\gp}{{\rm gp}}
	\newcommand{\mono}{{\rm mp}}
	\newcommand{\pag}{{\rm Pag}}
	\newcommand{\mas}{{\rm Mas}}
	
	\newcommand{\TODO}[1]{\textcolor{red}{TODO: #1}}

	\begin{frontmatter}
		
		
		\title{On some extremal position problems for graphs}
		

		\author[label1]{James Tuite}
		\ead{james.tuite@open.ac.uk}
		\author[label3]{Elias John Thomas}
		\ead{eliasjohnkalarickal@gmail.com}
		\author[label2]{Ullas Chandran S. V.}
		\ead{svuc.math@gmail.com}
		
		\address[label1]{Department of Mathematics and Statistics, Open University, Walton Hall, Milton Keynes, UK}
		
		\address[label3]{Department of Mathematics, Mar Ivanios College, University of Kerala, Thiruvananthapuram-695015, Kerala, India}
		
		\address[label2]{Department of Mathematics, Mahatma Gandhi College, University of Kerala, Thiruvananthapuram-695004, Kerala, India}

		\begin{abstract}
			The general position number of a graph $G$ is the size of the largest set of vertices $S$ such that no geodesic of $G$ contains more than two elements of $S$. The monophonic position number of a graph is defined similarly, but with `induced path' in place of `geodesic'. In this paper we investigate some extremal problems for these parameters. Firstly we discuss the problem of the smallest possible order of a graph with given general and monophonic position numbers. We then determine the asymptotic order of the largest size of a graph with given general or monophonic position number, classifying the extremal graphs with monophonic position number two. Finally we establish the possible diameters of graphs with given order and monophonic position number.

		\end{abstract}
		
		\begin{keyword}
			general position \sep monophonic position  \sep Tur\'{a}n problems \sep size \sep diameter \sep induced path.
			
			\MSC 05C12 \sep 05C35 \sep 05C69 
		\end{keyword}
		
	\end{frontmatter}

	\section{Introduction}
	
	In this paper all graphs will be taken to be simple and undirected. The order of the graph $G$ will be denoted by $n$ and its size by $m$.  The \emph{clique number} $\omega (G)$ of a graph $G$ is the order of the largest clique in $G$. An \emph{independent union of cliques} in a graph $G$ is an induced subgraph $H$ of $G$ such that every component of $H$ is a clique.  The \emph{independent clique number} $\alpha ^{\omega }(G)$ is the order of a largest independent union of cliques in $G$. An \emph{extreme} vertex is a vertex with neighbourhood that induces a clique in $G$.  The distance $d(u,v)$ between two vertices $u$ and $v$ in a graph $G$ is the length of the shortest path in $G$ from $u$ to $v$ and a shortest path is called a \emph{geodesic}.  An induced path is a path without any chords; we will also call such a path \emph{monophonic}. The number of leaves (or pendant vertices) of a graph $G$ will be denoted by $\ell(G)$. For any terminology not defined here, we refer the reader to~\cite{BonMur}.  
	
	The general position problem for graphs can be traced back to one of the many puzzles of Dudeney~\cite{dudeney-1917}. This problem was introduced in the context of graph theory independently by several authors~\cite{ullas-2016,HarLouTso,ManKla2}. A set $S$ of vertices of a graph $G$ is in \emph{general position} if no geodesic of $G$ contains more than two points of $S$; in this case $S$ is a \emph{general position set}, or, for short, a \emph{gp-set}. The general position problem asks for the largest possible size of a gp-set for a given graph $G$; this number is denoted by $\gp(G)$. This problem has applications in navigation in networks~\cite{ManKla2}, social sciences~\cite{WangWangChang} and the study of social networks~\cite{Brandes}. 
	
	In~\cite{Dourado-2008,Dourado-2010} various problems in geodetic convexity in graphs were generalised to monophonic paths. In the paper~\cite{ThomasChandranTuite} the present authors introduced the `monophonic position number' of a graph, which is defined similarly to the gp-number, but with `shortest path' replaced by `induced path'; a set $S$ of vertices in a graph $G$ is in \emph{monophonic position} if there is no monophonic path in $G$ that contains more than two elements of $S$. A set satisfying this condition is called a \emph{monophonic position set} or simply an \emph{mp-set}.  The mp- and gp-numbers of trees have a particularly simple form.
	
	\begin{theorem}{\rm \cite{ullas-2016,ThomasChandranTuite}} \label{trees}
		For any tree $T$ with leaf number $\ell (T)$ we have $\mono(T) = \gp(T) = \ell(T)$.
	\end{theorem}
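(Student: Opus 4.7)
My plan is to prove $\gp(T) = \ell(T)$ and to deduce the equality with $\mono(T)$ for free. The crucial point is that in a tree any two vertices are joined by a unique path, which is simultaneously the geodesic and---since any chord would create a cycle---an induced path; consequently the general position and monophonic position conditions coincide on trees, so $\mono(T) = \gp(T)$. For the lower bound $\gp(T) \ge \ell(T)$ it suffices to observe that the set $L$ of all leaves of $T$ is a gp-set: if three leaves lay on a common geodesic, the middle one would be an internal vertex of the geodesic and hence would have degree at least two.

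For the reverse inequality I would use a leaf-swap argument. Let $S$ be a gp-set containing a non-leaf $v$, and let $T_1, \dots, T_k$ (with $k \ge 2$) be the components of $T - v$. A key sublemma is that all of $S \setminus \{v\}$ lies in a single branch $T_{i_0}$: otherwise vertices $a \in T_i \cap S$ and $b \in T_j \cap S$ with $i \ne j$ would sit together with $v$ on the unique $a$-$b$ path, placing three elements of $S$ on one geodesic. Each remaining branch $T_j$ with $j \ne i_0$ contains no element of $S$ but must contain a leaf of $T$ (if $T_j$ is a single vertex it is itself a leaf of $T$; otherwise $T_j$ has at least two leaves, at most one of which, namely the neighbour $u_j$ of $v$, can fail to be a leaf of $T$). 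Picking such a leaf $u$ and forming $S' = (S \setminus \{v\}) \cup \{u\}$, one checks that $S'$ is still a gp-set of the same cardinality; iterating yields a gp-set of size $|S|$ contained in $L$, whence $|S| \le \ell(T)$.

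The main obstacle is verifying that $S'$ remains in general position. Suppose three points of $S'$ lie on a common geodesic of $T$; since $S$ is a gp-set, the new leaf $u$ must be one of the three. But the unique path from $u$ to any element of $V(T_{i_0})$ passes through $v$, and a short case analysis on the order of the three points along the geodesic forces $v$ together with two elements of $S \setminus \{v\}$ onto a common geodesic, contradicting the gp-property of $S$.
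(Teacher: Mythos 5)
Your proof is correct. Note that the paper does not prove this statement at all: Theorem~\ref{trees} is quoted from the references \cite{ullas-2016,ThomasChandranTuite}, so there is no in-paper argument to compare against. Your three ingredients are all sound: in a tree the unique path between two vertices is simultaneously the geodesic and the only induced path, so mp-sets and gp-sets coincide; the leaf set is in general position because a middle vertex of a path has degree at least two; and the exchange step is valid, since if three vertices of $S' = (S \setminus \{v\}) \cup \{u\}$ lay on a common path, the offending triple must involve $u$, and the case analysis you sketch does close up ($u$ cannot be the middle vertex because the path between two vertices of $T_{i_0}$ stays inside $T_{i_0}$, while if some $a \in T_{i_0}$ is the middle vertex then the unique $v$--$b$ path already carries $v$, $a$, $b$, contradicting that $S$ is a gp-set). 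Each swap strictly decreases the number of non-leaves in the set, so the iteration terminates in a gp-set of leaves of the same size, giving $\gp(T) \leq \ell(T)$. The only caveat is the degenerate case of the one- or two-vertex tree, where the statement depends on the convention for $\ell(T)$; this is an issue with the statement rather than with your argument.
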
  
	
	In this paper we consider three extremal problems for mp- and gp-sets. In~\cite{ThomasChandranTuite} it was shown that for any $a,b \in \mathbb{N}$ there exists a graph with mp-number $a$ and gp-number $b$ if and only if $2 \leq a \leq b$ or $a = b = 1$; in Section~\ref{smallest graphs} we discuss the problem of finding the smallest possible order of a graph with given mp- and gp-numbers. In Section~\ref{section:Turan} we determine the asymptotic order of the largest possible size of a graph with given order and mp-number. We also classify the graphs with mp-number two and largest size and show that the largest size of a graph with order $n$ and given gp-number is linear in $n$.  Finally in Section~\ref{section:diameter} we determine possible diameters of graphs with given order and mp-number.

	\section{The smallest graph with given mp- and gp-numbers}\label{smallest graphs}

	In a previous paper~\cite{ThomasChandranTuite} the authors proved the following realisation theorem.
	
	\begin{theorem}
		For all $a,b \in \mathbb{N}$ there exists a graph with mp-number $a$ and gp-number $b$ if and only if $2 \leq a \leq b$ or $a = b =1$. 
	\end{theorem}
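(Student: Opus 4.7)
The plan is to establish the two directions separately. For the "only if" direction I would argue as follows. Every geodesic of a graph is chordless (by minimality of distance), hence is a monophonic path, so every mp-set is automatically a gp-set; this gives $\mono(G) \leq \gp(G)$ for every graph $G$, forcing $a \leq b$. Moreover, in any graph on at least two vertices any pair of vertices vacuously forms an mp-set, so $\mono(G) \geq 2$ whenever $|V(G)| \geq 2$; hence $\mono(G) = 1$ can only occur when $G = K_1$, in which case also $\gp(G) = 1$. This rules out every pair $(a,b)$ outside those listed.

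For sufficiency I would dispatch the easy cases first: $G = K_1$ realises $(1,1)$, and $G = K_a$ realises $(a,a)$ for every $a \geq 2$, since its full vertex set is both a maximum mp-set and a maximum gp-set. The interesting range is $2 \leq a < b$, and here my plan is to exhibit an explicit family $G_{a,b}$ built on $b$ designated vertices $S = \{s_1, \ldots, s_b\}$ together with some auxiliary vertices. The three requirements I would verify are (i) no geodesic meets $S$ in more than two points, so $\gp(G_{a,b}) \geq b$; (ii) some $a$-element subset $T \subseteq S$ is in monophonic position, giving $\mono(G_{a,b}) \geq a$; and (iii) every $(a+1)$-subset of $V(G_{a,b})$ contains three vertices on a common induced path, forcing $\mono(G_{a,b}) \leq a$, and no $(b+1)$-subset lies in general position, forcing $\gp(G_{a,b}) \leq b$. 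A natural candidate is obtained by joining the $s_i$ through short detour paths sharing common internal vertices (a construction in the spirit of a "book" of $2$-paths), so that many induced paths connect triples in $S$ while the pairwise distances in $S$ remain $2$ and no $s_i$ lies on a geodesic between two other $s_j, s_k$.

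The main obstacle is the inherent tension in (iii): adding structure to shorten induced paths between designated vertices (useful for pushing $\mono$ down to $a$) tends to also create new geodesics through designated vertices, which can collapse $\gp$ below $b$. The key step of the proof will therefore be a careful local analysis of the auxiliary vertices, checking that every attempt to extend an mp-set beyond $a$ vertices forces a triple onto a common induced path while every attempt to extend a gp-set beyond $b$ vertices forces a triple onto a common geodesic. Once both bounds match their lower counterparts the realisation is complete, covering all admissible pairs $(a,b)$ and closing the equivalence.
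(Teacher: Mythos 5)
Your necessity argument is sound and complete: geodesics are chordless, so every mp-set is a gp-set and $\mono(G)\leq\gp(G)$; and any two vertices are trivially in monophonic position, so $\mono(G)=1$ forces $G=K_1$. The cases $(1,1)$ and $(a,a)$ via $K_1$ and $K_a$ are also fine. (For the record, the paper does not reprove this theorem --- it cites it from an earlier paper of the authors --- but its Section~2 contains constructions that realise all admissible pairs, which is the standard one should hold your sufficiency argument to.)

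The genuine gap is the entire range $2\leq a<b$, which is the whole content of the theorem. What you offer there is a list of desiderata (i)--(iii) and the phrase ``a construction in the spirit of a book of $2$-paths,'' followed by an acknowledgement that the two requirements pull against each other and that ``a careful local analysis'' will be needed. No graph $G_{a,b}$ is actually specified, and nothing is verified; you cannot defer the key step of a realisation theorem to a construction you have not written down, especially after correctly identifying why naive candidates fail. A book of $2$-paths (i.e.\ $b$ vertices pairwise joined through shared midpoints) is essentially a complete bipartite-like configuration, and such graphs tend to have mp-number and gp-number controlled by the same quantity $\max\{r_1,t\}$ (cf.\ Lemma~3.3 of this paper), so it is far from clear that your candidate separates the two parameters at all. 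A concrete way to close the gap, in the spirit of what this paper does, is to take a starlike tree $T$ with $\ell(T)=b$ leaves (so $\mono(T)=\gp(T)=b$ by the tree theorem) and join it to a single vertex: by the join formula $\mono(T\vee K_1)=\max\{3,\mono'(T)\}$ collapses the mp-number to roughly the number of branches while $\gp(T\vee K_1)$ is governed by $\alpha^{\omega}(T)$, which stays large; choosing the numbers of branches of lengths one and two (and one long branch for the regime $b>2a$) then realises every pair with $3\leq a<b$, and the pairs $(2,b)$ need a separate family such as the half-wheels or pagoda graphs. Until you either carry out such a construction and verify both position numbers, or fully specify and analyse your own, the sufficiency direction is unproved.
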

	We now take this further by asking for the smallest possible order of a graph with given mp- and gp- numbers. For $2 \leq a \leq b$ we will denote the order of the smallest graph $G$ with $\mono(G) = a$ and $\gp(G) = b$ by $\mu (a,b)$. Trivially $\mu (a,a) = a$ for $a \geq 2$, so we assume that $a < b$. We introduce three families of graphs that will give strong upper bounds on $\mu (a,b)$.     
	
	\begin{lemma}\label{Pag}
		For each $r \geq 3$, there exists a graph with order $n = 3r+1$, mp-number $2$ and gp-number $2r$ and a graph with order $n = 3r$, mp-number $2$ and gp-number $2r-1$. 
	\end{lemma}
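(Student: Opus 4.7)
The plan is to exhibit two explicit graphs---$\pag_r$ of order $3r+1$ (giving the first statement) and a companion graph of order $3r$ (giving the second)---and verify directly that each has the claimed $\gp$- and $\mono$-numbers. A preliminary observation constrains the search: any graph $G$ with $\mono(G)=2$ must be triangle-free, since the three vertices of a triangle cannot lie on a common induced path (two of them adjacent in the path are fine, but the remaining adjacency is a chord). So I would look for a triangle-free construction, most naturally one built from a long cycle with a small amount of added structure (a central hub vertex, some twin vertices, or a short subdivision pattern) chosen so as to boost the $\gp$-number up to $2r$ while keeping induced paths long enough that every triple can be covered.

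For the general position number, I would identify a concrete candidate gp-set $S$ of $2r$ vertices---plausibly the ``rim'' vertices of the $r$ gadgets, with the non-$S$ vertices forming the hub or spine of the graph---and establish $\gp(\pag_r)=2r$ in two steps. First, to show $\gp(\pag_r)\ge|S|$, I would compute the distance structure explicitly and check that between any two elements of $S$ the geodesic(s) pass only through non-$S$ vertices, so that no geodesic of $G$ ever carries three vertices of $S$. Second, for the matching upper bound, I would show that every vertex $v\notin S$ sits on a short (length-$2$ or length-$3$) geodesic between two elements of $S$, so that adjoining $v$ to $S$ immediately destroys the general position property.

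The main content of the proof is showing that $\mono(\pag_r)\le 2$. The lower bound $\mono(\pag_r)\ge 2$ is immediate. For the upper bound, I must check that every triple $\{x,y,z\}\subseteq V(\pag_r)$ lies on some common induced path. This is the delicate step, precisely because the graph carries the extra edges that push $\gp$ up to $2r$ and those edges create potential chords. I would organise the verification by case analysis on how the triple is distributed among the constituent parts of $\pag_r$ (rim versus hub, and inside which gadget), and in each case construct an explicit induced path, routing it around the relevant cycle of $\pag_r$ so that any potential chord through the hub is avoided. The principal obstacle is exactly here: the design of $\pag_r$ must provide enough chord-free detours so that for every configuration a valid induced path exists, and this is a genuinely delicate requirement that constrains the construction.

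Finally, the order-$3r$ companion graph with $\gp=2r-1$ and $\mono=2$ is most naturally obtained from $\pag_r$ by deleting a carefully chosen vertex---ideally a non-$S$ hub vertex whose removal reduces $\gp$ by exactly one while preserving both the triangle-freeness and the ``every triple is covered by an induced path'' property. The verification for this graph follows the same three steps (gp lower bound, gp upper bound, $\mono\le 2$) as for $\pag_r$, with minor adjustments to the case analysis in the $\mono$ step to account for the deleted vertex.
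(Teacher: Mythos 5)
Your proposal never actually produces the graph: everything is phrased as what you ``would look for'', and you yourself flag the decisive step --- designing a graph in which every vertex triple lies on a common induced path while the graph still carries a gp-set of size $2r$ --- as ``a genuinely delicate requirement that constrains the construction''. That design \emph{is} the content of the lemma; without an explicit graph and the accompanying case analysis there is nothing to verify, so as it stands this is a proof outline with the proof missing. For comparison, the paper's construction $\pag(r)$ takes three independent $r$-sets $A,B,C$ plus one extra vertex $x$, joins $b_i$ to $a_j$ and $c_j$ exactly when $j\neq i$, and joins $x$ to all of $C$; the induced seven-vertex paths $a_i,b_j,c_i,x,c_j,b_i,a_j$ supply exactly the ``chord-free detours'' you anticipate needing, and $A\cup C$ is the extremal gp-set. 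Your preliminary observation that $\mono(G)=2$ forces $G$ to be triangle-free is correct and is consistent with that graph.

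Beyond the missing construction, two steps of the plan are logically off. First, for the upper bound $\gp\leq 2r$ you propose to show that adjoining any vertex $v\notin S$ to your candidate set $S$ destroys general position; that only proves $S$ is a \emph{maximal} gp-set, not a \emph{maximum} one, since a larger gp-set need not contain $S$ at all. The paper instead starts from an arbitrary gp-set $K$ with $|K|\geq 2r$ and rules out configurations by exhibiting explicit geodesics through three of its members. Second, for the order-$3r$ companion you propose deleting a non-$S$ hub vertex chosen so that $\gp$ drops by exactly one; but if $S$ survives the deletion in general position (deleting a vertex off all relevant geodesics typically leaves it so), then $\gp$ does not drop at all. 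The correct move, and the paper's, is to delete a vertex of the extremal gp-set itself --- here $a_r$ --- and re-run the verification for $\pag(r)-\{a_r\}$.
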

	\begin{proof}
		For $r \geq 3$ we define the \textit{pagoda graph} $\pag(r)$ as follows. The vertex set of $\pag(r)$ consists of three sets $A = \{ a_1,a_2,\dots, a_r\} , B = \{ b_1, \dots , b_r\} , C = \{ c_1, \dots , c_r\} $ of size $r$ and an additional vertex $x$.  The adjacencies of $\pag(r)$ are defined as follows. For $1 \leq i \leq r$ we set $b_i$ to be adjacent to $a_j$ and $c_j$ for $j \not = i$. Also every vertex in $C$ is adjacent to $x$.  An example $\pag(4)$ is illustrated in Figure~\ref{fig:pagoda}.  For $r \geq 3$ we will also denote the graph $\pag(r) - \{ a_r\} $ formed by deleting the vertex $a_r$ by $\pag'(r)$.

		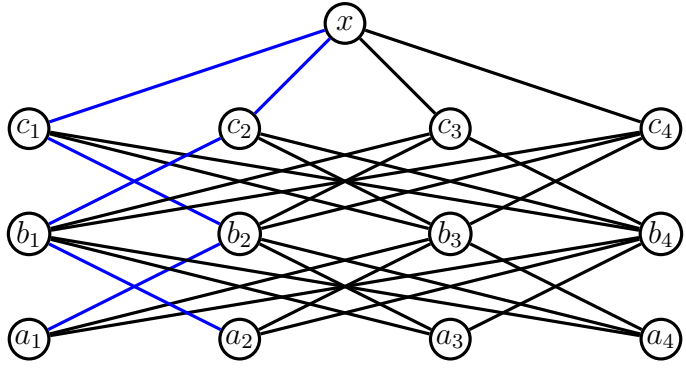
\begin{figure}
			\centering
			\begin{tikzpicture}[x=0.4mm,y=-0.4mm,inner sep=0.2mm,scale=0.7,very thick,vertex/.style={circle,draw,minimum size=15,fill=white}]

				\node at (-150,0) [vertex] (c1) {$c_1$};
				\node at (-50,0) [vertex] (c2) {$c_2$};
				\node at (50,0) [vertex] (c3) {$c_3$};
				\node at (150,0) [vertex] (c4) {$c_4$};
				
				\node at (-150,50) [vertex] (b1) {$b_1$};
				\node at (-50,50) [vertex] (b2) {$b_2$};
				\node at (50,50) [vertex] (b3) {$b_3$};
				\node at (150,50) [vertex] (b4) {$b_4$};
				
				\node at (-150,100) [vertex] (a1) {$a_1$};
				\node at (-50,100) [vertex] (a2) {$a_2$};
				\node at (50,100) [vertex] (a3) {$a_3$};
				\node at (150,100) [vertex] (a4) {$a_4$};

				\node at (0,-50) [vertex] (x) {$x$};
				
				\path
				
				(a1) edge[blue,very thick] (b2)
				(a1) edge (b3)
				(a1) edge (b4)
				(a2) edge[blue,very thick] (b1)
				(a2) edge (b3)
				(a2) edge (b4)
				(a3) edge (b1)
				(a3) edge (b2)
				(a3) edge (b4)
				(a4) edge (b1)
				(a4) edge (b2)
				(a4) edge (b3)
				
				(c1) edge[blue,very thick] (b2)
				(c1) edge (b3)
				(c1) edge (b4)
				(c2) edge[blue,very thick] (b1)
				(c2) edge (b3)
				(c2) edge (b4)
				(c3) edge (b1)
				(c3) edge (b2)
				(c3) edge (b4)
				(c4) edge (b1)
				(c4) edge (b2)
				(c4) edge (b3)
				
				(c1) edge[blue,very thick] (x)
				(c2) edge[blue,very thick] (x)
				(c3) edge (x)
				(c4) edge (x)

				;
			\end{tikzpicture}
			\caption{$\pag(4)$ with the monophonic path $P_{1,2}$ in blue}
			\label{fig:pagoda}
		\end{figure}
		The order of $\pag(r)$ is $n = 3r+1$.  We now show that for $r \geq 3$ we have $\mono(\pag(r)) = 2$ and $\gp(\pag(r)) = 2r$. Trivially any two vertices constitute an mp-set and $A \cup C$ is a gp-set of size $2r$. It therefore suffices to show that $\mono(\pag(r)) \leq 2$ and $\gp(\pag(r)) \leq 2r$. 
		
		For a contradiction, let $M$ be an mp-set in $\pag(r)$ with size $\geq 3$.  For $1 \leq i,j \leq r$ and $i \not = j$ let $P_{i,j}$ be the monophonic path $a_i,b_j,c_i,x,c_j,b_i,a_j$.  The existence of this path shows that if $x \in M$, then $M$ cannot contain two other vertices of $\pag(r)$, so we can assume that $x \not \in M$.  Suppose that $M$ contains two vertices from the same `layer' $A, B$ or $C$.  For the sake of argument say $a_1,a_2 \in M$; the other cases are similar.  The path $P_{1,2}$ shows that $b_1,b_2,c_1,c_2 \not \in M$. If another element of $A$, say $a_3$, belonged to $M$, then we would have the monophonic path $a_1,b_2,a_3,b_1,a_2$, a contradiction.  For $3 \leq i \leq r$ the path $a_1,b_i,a_2$ is trivially monophonic, so $M \cap B = \emptyset $.  It follows that there must be a point $c_i \in M$ for some $3 \leq i \leq r$.  However $a_1,b_2,c_i,b_1,a_2$ is a monophonic path, another contradiction. As $M$ cannot contain $\geq 3$ points of $\pag(r)$ we obtain the necessary inequality.         
		
		Now assume that $K$ is any gp-set in $\pag(r)$ with size $\geq 2r$. For $1 \leq i,j,k \leq r$, $j \not \in \{ i,k\} $, let $Q_{i,j,k}$ be the geodesic $a_i,b_j,c_k,x$. Suppose that $x \in K$.  If also $K \cap C \not = \emptyset $, say $c_1 \in K$, then as $c_1,x,c_i$ is a geodesic for $2 \leq i \leq r$, it follows that $K \cap \{ c_2,\dots, c_r\} = \emptyset $. Also, letting $j \not \in \{ 1,i\} $ in the path $Q_{i,j,1}$ also shows that $K \cap A = K \cap (B-\{b_1 \} ) = \emptyset $, so that we would have $|K| \leq 3 < 2r$. Hence $K \cap C = \emptyset $.  Furthermore if some $b_j$ lies in $K$, then for $1 \leq i,k \leq r$ and $j \not \in \{ i,k\} $ the geodesic $Q_{i,j,k}$ contains $x, b_j$ and $a_i$, so that we would have $K \subseteq B \cup \{ a_i,x\} $ and $|K| \leq r+2 < 2r$. Therefore $K \subseteq A \cup \{ x\} $ and $|K| \leq r+1 < 2r$.   Therefore $x$ is not contained in any gp-set of $\pag(r)$ of size $\geq 2r$.
		
		Suppose now that $K \cap B \not = \emptyset $, say $b_1 \in K$.  For $2 \leq i,k \leq r$ the existence of the geodesic $Q_{i,1,k}$ shows that $K$ cannot intersect both $A-\{a_1\} $ and $C-\{c_1\} $.  Therefore if $|K| \geq 2r+1$, $K$ must either have the form $A \cup B \cup \{ c_1\} $ or $\{ a_1\} \cup B \cup C$; however, $a_1,b_2,c_1$ is a geodesic that contains three points from both of these sets. It follows that $|K| \leq 2r$.  Furthermore, if $|K \cap B| \geq 2$, say $b_2 \in K$, then $K \cap ((A-\{ a_1,a_2\} ) \cup (C-\{ c_1,c_2\} )) = \emptyset $ and also $K$ cannot contain both $a_i$ and $c_i$ for $i = 1,2$, so that $|K|$ would be bounded above by $r+2$, which is strictly less than $2r$, whereas if $K$ contains a unique vertex of $B$, then again $|K| \leq r+2$; therefore $A \cup C$ is the unique gp-set in $\pag(r)$ with size $2r$. In a similar fashion it can be shown that the graphs $\pag'(r) = \pag(r)-\{ a_r\} $ have order $3r$, mp-number $2$ and gp-number $2r-1$ for $r \geq 3$.
	\end{proof}

	We will now define a second family of graphs.  We need a result from~\cite{ThomasChandranTuite} on the mp- and gp-numbers of the join of graphs.
	
	\begin{lemma}{\rm \cite{ThomasChandranTuite}}\label{lemma:join}
		The monophonic and general position numbers of the join $G \vee H$ of graphs $G$ and $H$ is related to the monophonic and general position numbers of $G$ and $H$ by
		\[ \mono(G \vee H) = \max\{\omega (G) + \omega (H), \mono(G), \mono(H)\}\]
		and
		\[ \gp(G \vee H) = \max\{\omega (G) + \omega (H), \alpha ^{\omega }(G), \alpha ^{\omega }(H)\}.\]
	\end{lemma}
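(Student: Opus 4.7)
The plan is to reduce both statements to a careful analysis of induced paths and geodesics in the join $G \vee H$. The crucial structural fact is that, because every vertex of $G$ is joined to every vertex of $H$, any two non-consecutive vertices of an induced path in $G \vee H$ must lie in the same side of the join (otherwise the edge between them is a chord). Consequently, every induced path of length at least three lies entirely inside $V(G)$ or entirely inside $V(H)$ and is an induced path of $G$ or $H$ respectively; the only induced paths that cross the join are of length two, of the form $u, v, w$ with (say) $u, w$ on the same side non-adjacent there and $v$ on the other side. Moreover the diameter of $G \vee H$ is at most two, so every geodesic has length at most two, and three vertices lying on a common geodesic amount to an induced $P_3$. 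I would establish these observations first, as they drive both arguments.

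For the monophonic identity, I would prove the lower bound by exhibiting sets of the required sizes: the union of a maximum clique of $G$ and a maximum clique of $H$ is a clique of size $\omega(G)+\omega(H)$ in $G \vee H$ and hence an mp-set; and any mp-set of $G$ remains an mp-set of $G \vee H$, since by the structural observation any induced path in $G \vee H$ through three vertices of $V(G)$ must lie wholly inside $V(G)$ and therefore be induced in $G$. For the upper bound, let $M$ be an mp-set of $G \vee H$ and set $M_G = M \cap V(G)$, $M_H = M \cap V(H)$. If both are non-empty and $u,w \in M_G$ were non-adjacent in $G$, then for any $v \in M_H$ the triple $u, v, w$ is an induced $P_3$ in $G \vee H$ containing three elements of $M$; so $M_G$ is a clique in $G$ and symmetrically $M_H$ is a clique in $H$, giving $|M| \le \omega(G)+\omega(H)$. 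If instead $M$ lies in one side, the bound $|M| \le \mono(G)$ or $\mono(H)$ follows from the path correspondence above.

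The general position identity follows by the same template, with geodesics in place of induced paths. The key refinement is that for $S \subseteq V(G)$, three vertices of $S$ lie on a common geodesic of $G \vee H$ if and only if two of them are non-adjacent in $G$ and the third is a common neighbour in $G$, which is precisely to say that $\{s_1,s_2,s_3\}$ induces a $P_3$ in $G$. Hence $S \subseteq V(G)$ is a gp-set of $G \vee H$ exactly when $G[S]$ is $P_3$-free, i.e. when $S$ is an independent union of cliques in $G$; this yields both the lower bound $\alpha^{\omega}(G) \le \gp(G \vee H)$ and the matching upper bound $|S| \le \alpha^{\omega}(G)$ when $S \subseteq V(G)$. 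The mixed case $S_G, S_H$ both non-empty is handled exactly as before: a non-edge in $S_G$ together with any vertex of $S_H$ produces an induced $P_3$ of length two, which is automatically a geodesic, so $S_G$ and $S_H$ must each be cliques and $|S| \le \omega(G)+\omega(H)$.

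The main obstacle is the structural lemma on induced paths in the join; once that is in hand, both identities follow from the same case split, according to whether the extremal set is concentrated on one side or straddles the join. A minor subtlety worth flagging is that, in the gp-case, an independent union of cliques $S$ of $G$ need not be an independent union of cliques of the full graph $G \vee H$, but the intrinsic $P_3$-free characterisation used above already accounts for every geodesic of $G \vee H$ through three vertices of $S$, so no additional work is required.
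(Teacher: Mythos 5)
This lemma is imported from \cite{ThomasChandranTuite} and the present paper gives no proof of it, so there is nothing internal to compare against; judged on its own terms, your argument is correct and complete. The structural observation that two non-consecutive vertices of an induced path in $G\vee H$ must lie on the same side (else the cross-edge is a chord) is exactly the right engine: it forces every induced path on four or more vertices into one side, reduces the crossing case to induced $P_3$'s with the middle vertex on the opposite side, and, since $\mathrm{diam}(G\vee H)\le 2$, identifies geodesics through three vertices with induced $P_3$'s. Your case split (extremal set contained in one side versus straddling the join) then delivers both identities, and the $P_3$-free characterisation of independent unions of cliques correctly explains why $\alpha^{\omega}$ rather than $\gp$ appears in the second formula.

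One small remark: the ``subtlety'' you flag at the end is not actually an issue, since for $S\subseteq V(G)$ the induced subgraphs $G[S]$ and $(G\vee H)[S]$ coincide, so $S$ is an independent union of cliques in $G$ if and only if it is one in $G\vee H$. The genuine point, which your argument already handles, is that being an independent union of cliques does not in general make a set a gp-set; it does here only because the join has diameter at most two, so every geodesic through three vertices of $S$ is an induced $P_3$ inside $G[S]$.
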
   
	
	For $r,s,t \geq 0$ let $T(r,s,t)$ be the starlike tree that has $r$ branches of length one, $s$ branches of length two and one branch of length $t$; in other words, $T(r,s,t)$ is the tree containing a vertex $x$ such that $T(r,s,t)-x$ consists of $r$ copies of $K_1$, $s$ copies of $K_2$ and one copy of $P_t$, where $P_t$ is the path of length $t-1$. We use $T(r,s,t)$ as a `base graph'. We define the graph $C(r,s,t)$ to be the join $T(r,s,t) \vee K_1$, where $K_1$ is a complete graph with vertex set $\{ y\} $. If $t = 0$ we will write $C(r,s)$ for $C(r,s,0)$.
	
	\begin{lemma}\label{chalice}
		If $t \geq 1$ and $r+s \geq 2$, then $C(r,s,t)$ has order $n = r+2s+t+2$ and we have $\mono(C(r,s,t)) = r+s+1$ and $\gp(C(r,s,t)) = r+2s+t-\lfloor \frac{t}{3}\rfloor $. Also if $r+s \geq 3$, then $C(r,s)$ has order $r+2s+2$ and $\mono(C(r,s)) = r+s$ and $\gp(C(r,s)) = r+2s$.
	\end{lemma}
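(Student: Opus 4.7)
The plan is to apply Lemma~\ref{lemma:join} with $G = T(r,s,t)$ and $H = K_1$, thereby reducing the computation of $\mono(C(r,s,t))$ and $\gp(C(r,s,t))$ to that of the invariants of the starlike tree $T(r,s,t)$. The order is immediate from counting the vertices on each branch: the tree contributes $1 + r + 2s + t$ vertices and $y$ adds one more, giving $n = r + 2s + t + 2$ (and $n = r + 2s + 2$ when the long branch is absent). Since $T(r,s,t)$ contains at least one edge, $\omega(T(r,s,t)) = 2$, and since $K_1$ has clique, mp- and independent clique numbers all equal to one, Lemma~\ref{lemma:join} collapses to
\[ \mono(C(r,s,t)) = \max\{3, \mono(T(r,s,t))\}, \qquad \gp(C(r,s,t)) = \max\{3, \alpha^{\omega}(T(r,s,t))\}. \]

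For the mp-number I apply Theorem~\ref{trees}: the leaves of $T(r,s,t)$ are the $r$ short-branch endpoints, the $s$ far endpoints of the length-two branches, and, when $t \geq 1$, the tip of the long branch. Hence $\mono(T(r,s,t)) = r + s + 1$ when $t \geq 1$, and $\mono(T(r,s,0)) = r + s$. Under the respective hypotheses $r+s \geq 2$ and $r+s \geq 3$ this value dominates $3$, giving the stated mp-numbers of $C(r,s,t)$ and $C(r,s)$.

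The main work is the determination of $\alpha^{\omega}(T(r,s,t))$. Since every induced subgraph of a tree is a forest, each component of an independent union of cliques inside a tree is simultaneously a tree and a clique, hence $K_1$ or $K_2$; consequently $\alpha^{\omega}$ of a tree is its dissociation number. An auxiliary step is the identity $\alpha^{\omega}(P_k) = k - \lfloor k/3 \rfloor$, realised by the periodic pattern \emph{take, take, skip} and optimal because any three consecutive vertices of a path contribute at most two to such a set. Given this, I would split on whether the centre $x$ lies in an optimal set $S$. If $x \notin S$, the branches decouple and we may take all $r$ short-branch leaves, both vertices of every length-two branch, and an optimal dissociation set of the long branch, obtaining precisely $r + 2s + t - \lfloor t/3 \rfloor$. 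If $x \in S$, then at most one neighbour of $x$ also lies in $S$ (otherwise $S$ would induce a $P_3$ centred at $x$), and a short case analysis on which branch that neighbour belongs to, combined with the path formula, bounds the total by roughly $s + t + 1 - \lfloor (t-1)/3 \rfloor$, which is strictly smaller whenever $r + s \geq 2$. Thus $\alpha^{\omega}(T(r,s,t)) = r + 2s + t - \lfloor t/3 \rfloor$, and since this quantity is at least $3$, the displayed formula gives the stated gp-number. The $C(r,s)$ statement follows from the same scheme with the long branch omitted; the hypothesis $r+s \geq 3$ ensures $r + 2s \geq 3$, so the max with $3$ is again attained by the tree invariant.

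The main obstacle is the bookkeeping for the case $x \in S$: one has to track how forbidding the remaining neighbours of $x$ shortens the usable portion of each branch and then verify, across all remaining parameter ranges, that the resulting total is strictly beaten by the clean $r + 2s + t - \lfloor t/3 \rfloor$ obtained when $x$ is excluded from $S$.
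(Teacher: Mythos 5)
Your proposal is correct and follows essentially the same route as the paper: reduce to the starlike tree via Lemma~\ref{lemma:join} with $\omega(T(r,s,t))+\omega(K_1)=3$, get the mp-number from Theorem~\ref{trees}, and get the gp-number from $\alpha^{\omega}(T(r,s,t))=r+2s+t-\lfloor t/3\rfloor$ (which the paper simply asserts, whereas you sketch the dissociation-number computation; note only that in your $x\in S$ case the bound can equal, rather than strictly undercut, $r+2s+t-\lfloor t/3\rfloor$ when $r+s=2$ and $3\mid t$, which does not affect the conclusion).
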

	\begin{proof}
		By Theorem~\ref{trees} we have $\mono(T(r,s,t)) = r+s+1$ if $t \geq 1$ and $\mono(T(r,s,0)) = r+s$.  Therefore by Lemma~\ref{lemma:join} if $r+s+t \geq 1$ we obtain 
		\[ \mono(C(r,s,t)) = \mono(T(r,s,t) \vee K_1) = \max\{ 3,r+s+1 \}   \] 
		if $t \geq 1$ and similarly $\mono(C(r,s)) = \max\{ 3,r+s\} $.  Therefore if $t \geq 1$ and $r+s \geq 2$ then $\mono(C(r,s,t)) = r+s+1$ and if $r+s \geq 3$ then $\mono(C(r,s)) = r+s$. As $\alpha ^{\omega }(T(r,s,t)) = r+2s+t-\lfloor \frac{t}{3}\rfloor $, the result for the gp-number follows in a similar fashion.
	\end{proof}
	
	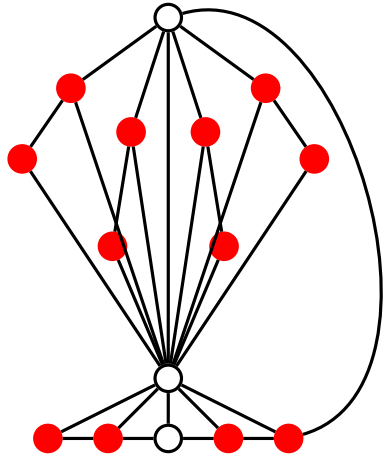
\begin{figure}
		\centering
		\begin{tikzpicture}[x=0.4mm,y=-0.4mm,inner sep=0.2mm,scale=0.4,very thick,vertex/.style={circle,draw,minimum size=10,fill=white}]
			
			\node at (0,0) [vertex] (x) {};
			
			\node at (-100,350) [vertex,red] (p5) {};
			\node at (-50,350) [vertex,red] (p4) {};
			\node at (0,350) [vertex] (p3) {};
			\node at (50,350) [vertex,red] (p2) {};
			\node at (100,350) [vertex,red] (p1) {};	
			
			\node at (80.90,58.78) [vertex,red] (u1) {};
			\node at (30.90,95.11) [vertex,red] (u2) {};
			\node at (-30.90,95.11) [vertex,red] (u3) {};
			\node at (-80.90,58.78) [vertex,red] (u4) {};

			\node at (121.35,117.56) [vertex,red] (v1) {};
			\node at (46.35,190.21) [vertex,red] (v2) {};
			\node at (-46.35,190.21) [vertex,red] (v3) {};
			\node at (-121.35,117.56) [vertex,red] (v4) {};	
			\node at (0,300) [vertex] (z) {};

			\path
			(z) edge (p1)
			(z) edge (p2)
			(z) edge (p3)
			(z) edge (p4)
			(z) edge (p5) 
			(p1) edge (p2)
			(p2) edge (p3)
			(p3) edge (p4)
			(p4) edge (p5)
			(x) edge [bend left = 90] (p1)	
			(x) edge (u1)
			(x) edge (u2)
			(x) edge (u3)
			(x) edge (u4)
			
			(u1) edge (v1)
			(u2) edge (v2)
			(u3) edge (v3)
			(u4) edge (v4)
			
			(z) edge (x)
			(z) edge (u1)
			(z) edge (u2)
			(z) edge (u3)
			(z) edge (u4)
			(z) edge (v1)
			(z) edge (v2)
			(z) edge (v3)
			(z) edge (v4)

			;
		\end{tikzpicture}
		\caption{$C(0,4,5)$ with an optimal gp-set in red}
		\label{fig:C(0,4,5)}
	\end{figure}

	We now provide a third family of extremal graphs.
	\begin{lemma}\label{Mas}
		For all $a$ and $b$ such that $3 \leq b$, $\frac{2b}{3} \leq a < b$ there exists a graph $G$ with $\mono(G) = a$, $\gp(G) = b$ and order $n = b+2$. 
	\end{lemma}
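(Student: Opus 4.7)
The plan is to apply the chalice construction of Lemma~\ref{chalice} with parameters chosen to force the target values of $\mono$ and $\gp$, handling only a single small boundary case by a direct argument.

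Concretely, I would set $r = 2a-b$ and $s = b-a$. The hypothesis $\tfrac{2b}{3}\leq a<b$ gives $r\geq \tfrac{b}{3}\geq 0$ and $s\geq 1$, while the identities $r+2s=b$ and $r+s=a$ follow at once. Hence the chalice $C(r,s)$ will have order $r+2s+2=b+2$, monophonic position number $r+s=a$, and general position number $r+2s=b$, exactly the required realisation -- provided Lemma~\ref{chalice} may be invoked, whose hypothesis is $r+s\geq 3$.

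For $b\geq 4$ this hypothesis will be automatic: $a\geq \tfrac{2b}{3}\geq \tfrac{8}{3}$ forces $a\geq 3$, so $r+s=a\geq 3$. Thus Lemma~\ref{chalice} applied to $(r,s)=(2a-b,\,b-a)$ delivers the desired graph. The only remaining case is $b=3$, which combined with the hypothesis forces $a=2$; here I would take $G=C_5$ and verify directly that it has order $5=b+2$, that $\mono(C_5)=2$ (every triple of vertices of $C_5$ lies on the induced $P_4$ obtained by deleting one of the remaining two), and that $\gp(C_5)=3$ (three alternating vertices form a gp-set, whereas any four vertices of $C_5$ contain three consecutive ones and thus lie on a length-two geodesic).

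The main obstacle, which is quite mild, is spotting the correct specialisation $r=2a-b,\;s=b-a$ and isolating the edge case $b=3,\,a=2$ where the chalice lemma does not apply; once these are in hand the conclusion follows mechanically. A more substantial effort would be needed if one insisted on exhibiting a visibly distinct third family (presumably the intended ``Mas'' family) rather than leaning on Lemma~\ref{chalice}, since one would then have to describe and analyse a bespoke graph from scratch and verify that both parameters land exactly on $a$ and $b$.
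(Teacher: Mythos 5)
Your proof is correct, but it takes a genuinely different route from the paper. The paper constructs a new family $\mas(r,s)$ from scratch (a clique $R$ split into two halves, an independent set $S$, and two hub vertices $x,y$ each joined to $S$ and to one half of $R$), computes $\mono(\mas(r,s)) = s+\lceil r/2\rceil$ and $\gp(\mas(r,s)) = r+s$ directly, and then specialises $r = 2(b-a)$, $s = 2a-b$; the constraint $s \geq \lfloor r/2\rfloor$ is exactly what produces the hypothesis $a \geq \tfrac{2b}{3}$. You instead reuse the chalice graphs of Lemma~\ref{chalice} with $(r,s) = (2a-b,\,b-a)$, which is legitimate (that lemma precedes this one, there is no circularity), and your bookkeeping is right: $r \geq 1$, $s \geq 1$, $r+s = a$, $r+2s = b$, order $b+2$, with the single excluded case $(a,b)=(2,3)$ handled correctly by $C_5$. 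In fact your argument shows the stated lemma is subsumed, for $a \geq 3$, by the chalice construction over the wider range $a \geq \tfrac{b}{2}$ --- something the paper itself exploits in the third bullet of Theorem~\ref{best upper bound}. What your approach buys is brevity and reuse of existing machinery; what the paper's buys is a structurally distinct second extremal family of order $b+2$ (relevant to the paper's cataloguing of extremal graphs and the counts of non-isomorphic solutions in Table~\ref{fig:table}), at the cost of having to analyse a bespoke graph. If the intent is merely to establish the existence claim as stated, your proof suffices.
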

	\begin{proof}
		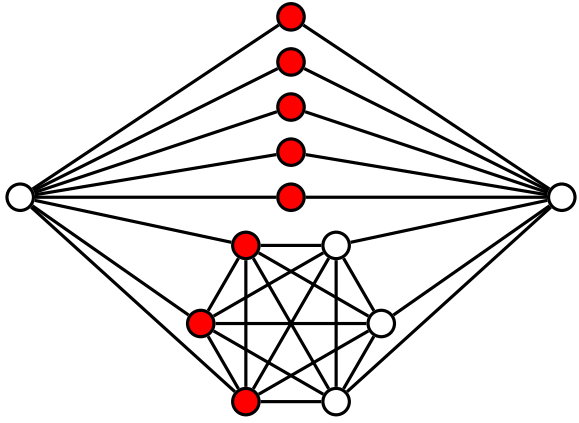
\begin{figure}
			\centering
			\begin{tikzpicture}[x=0.4mm,y=-0.4mm,inner sep=0.2mm,scale=0.6,very thick,vertex/.style={circle,draw,minimum size=10,fill=white}]
				\node at (0,50) [vertex,fill=red] (v1) {$$};
				\node at (0,25) [vertex,fill=red] (v2) {$$};
				\node at (0,0) [vertex,fill=red] (v3) {$$};
				\node at (0,-25) [vertex,fill=red] (v4) {$$};
				\node at (0,-50) [vertex,fill=red] (v5) {$$};
				\node at (-150,50) [vertex] (b) {$$};
				\node at (150,50) [vertex] (b') {$$};
				
				\node at (50,120) [vertex] (a1) {};
				\node at (25,163.3) [vertex] (a2) {};
				\node at (-25,163.3) [vertex,fill=red] (a3) {};
				\node at (-50,120) [vertex,fill=red] (a4) {};
				\node at (-25,76.7) [vertex,fill=red] (a5) {};
				\node at (25,76.7) [vertex] (a6) {};

				\path
				(a1) edge (a2)
				(a1) edge (a3)
				(a1) edge (a4)
				(a1) edge (a5)
				(a1) edge (a6)
				(a2) edge (a3)
				(a2) edge (a4)
				(a2) edge (a5)
				(a2) edge (a6)
				(a3) edge (a4)
				(a3) edge (a5)
				(a3) edge (a6)
				(a4) edge (a5)
				(a4) edge (a6)
				(a5) edge (a6)
				
				(b') edge (a6)
				(b') edge (a1)
				(b') edge (a2)
				(b) edge (a5)
				(b) edge (a4)
				(b) edge (a3)
				(b) edge (v1)
				(b) edge (v2)
				(b) edge (v3)
				(b) edge (v4)
				(b) edge (v5)	
				(b') edge (v1)
				(b') edge (v2)
				(b') edge (v3)
				(b') edge (v4)
				(b') edge (v5)

				;
			\end{tikzpicture}
			\caption{$\mas(6,5)$ with an optimal mp-set in red}
			\label{fig:b+2}
		\end{figure}
		
		Let $s \geq \lfloor \frac{r}{2} \rfloor$. We define a graph $\mas(r,s)$ as follows. Let $R$ be a set of $r$ vertices and draw the complete graph $K_r$ on $R$. Divide $R$ into two parts $R'$ and $R''$, where $|R'| = \lceil \frac{r}{2} \rceil , |R''| = \lfloor \frac{r}{2} \rfloor $. Let $S$ be an independent set with $S \cap R = \emptyset $ and $|S| = s$. Introduce two new vertices $x, y$ and connect $x$ to every vertex in $S \cup R'$ and $y$ to every vertex in $S \cup R''$. The order of $\mas(r,s)$ is $n = r+s+2$. The graph $\mas(6,5)$ is shown in Figure~\ref{fig:b+2}.
		
		As vertices in $S$ are twins and vertices in $R'$ are adjacent twins, it is easily seen that $S \cup R'$ is an mp-set in $\mas(r,s)$.  It follows that $\mono(\mas(r,s)) \geq s+\lceil \frac{r}{2} \rceil $. We show that this mp-set is optimal. Suppose that $M$ is an mp-set in $\mas(r,s)$ containing $\geq s+\lceil \frac{r}{2} \rceil +1$ vertices. If $M$ contains $x$, then $M$ can contain points in at most one of the sets $S,R',R''$, so $|M| \leq 1+\max \{ s,\lceil \frac{r}{2} \rceil \} \leq s + \lceil \frac{r}{2} \rceil $. Similarly $M$ cannot contain $y$.  $M$ does not intersect all three of $S,R'$ and $R''$, as any such three points are connected by a monophonic path, so that again $|M| \leq s+ \lceil \frac{r}{2} \rceil $. Therefore $\mono(\mas(r,s)) = s+\lceil \frac{r}{2} \rceil $.
		
		The only geodesics between two elements of $S$ or a vertex of $S$ and a vertex of $R$ have length two and pass through $x$ or $y$. Also the distance between vertices in $R$ is one. It follows that $R \cup S$ is in general position and it is clear that $\mas(r,s)$ can contain no larger gp-set. Thus $\gp(\mas(r,s)) = r+s$. For given $a,b$ in the above range setting $r = 2(b-a)$ and $s = 2a-b$ yields the required graph.  	
	\end{proof}
	
	We first prove a lower bound that shows that the graphs with order $b+2$ from Lemmas~\ref{chalice} and~\ref{Mas} are extremal.

	\begin{lemma}\label{lemma:lower bound}
		For $2 \leq a < b$ we have $\mu (a,b) \geq b+2$.
	\end{lemma}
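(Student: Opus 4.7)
I will prove the contrapositive: if $G$ has order $n \leq b+1$ and $\gp(G) = b$, then in fact $\mono(G) = b$, which contradicts the assumption $\mono(G) = a < b$. The case $n = b$ is immediate, since $V(G)$ itself is then a gp-set and so $G$ admits no geodesic of length at least two, forcing $G$ to be a disjoint union of cliques; for such a graph $\mono(G) = n = b$.

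In the principal case $n = b+1$, let $S$ be a gp-set of size $b$ and let $v^*$ be the unique vertex of $V(G) \setminus S$. My target is to show that $S$ is itself an mp-set. A preliminary observation is that $G[S]$ is a disjoint union of cliques: if $x, y, z$ formed an induced $P_3$ in $G[S]$, then $d_G(x, z) = 2$ and $x, y, z$ would lie on a geodesic of $G$, contradicting $S$ being a gp-set.

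Suppose now for contradiction that there is a monophonic path $P = x_0 x_1 \cdots x_\ell$ in $G$ containing at least three vertices of $S$. Since $v^*$ is the only vertex outside $S$, either $P$ lies entirely in $S$ (giving an induced path of length at least two in $G[S]$, ruled out by the clique-partition observation), or $v^* = x_i$ for some $1 \leq i \leq \ell - 1$. The two sub-paths $x_0, \ldots, x_{i-1}$ and $x_{i+1}, \ldots, x_\ell$ are then induced paths in $G[S]$ and so each has at most two vertices, which together with $\ell \geq 3$ leaves only the configurations $P = (x_0, v^*, x_2, x_3)$ and $P = (x_0, x_1, v^*, x_3, x_4)$ up to reversal of $P$.

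The concluding step, which I expect to be the only slightly delicate part, is to show that in each configuration $P$ is actually a geodesic of $G$; this produces a geodesic with at least three $S$-vertices and the desired contradiction. The key tool is that every neighbour of a vertex $x \in S$ either lies in the clique of $x$ in $G[S]$ or equals $v^*$, while the induced-ness of $P$ supplies crucial non-adjacencies such as $v^* x_3$ in the first configuration and $v^* x_0, v^* x_4$ in the second. Inspecting the possible short walks of length $2$ or $3$ between the endpoints of $P$, these facts block every potential shortcut, yielding $d(x_0, x_\ell) = \ell$. The main obstacle is the case bookkeeping rather than any deep idea, and only a handful of candidate shortcuts need checking in each configuration.
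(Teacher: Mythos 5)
Your argument is correct, but it proceeds quite differently from the paper's. The paper works directly and for arbitrary order $n$: it takes a \emph{shortest} monophonic path $P$ containing three vertices $x,y,z$ of a maximum gp-set $K$ with $x,z$ as endpoints, notes $d(x,z)\geq 2$, and then either a long $x,z$-geodesic immediately supplies two vertices outside $K$, or ($d(x,z)=2$) the minimality of $P$ forces $P$ to contain a vertex outside $K$ while inducedness forces the length-two geodesic to be internally disjoint from $P$, again yielding two vertices outside $K$. You instead argue by contrapositive for $n\leq b+1$, exploit the structural fact that a gp-set $S$ covering all but one vertex forces $G[S]$ to be $P_3$-free (a disjoint union of cliques), and then classify the possible positions of the single external vertex $v^*$ on a hypothetical monophonic path through three $S$-vertices, showing the path must be a geodesic. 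Both routes are sound; the paper's is shorter and avoids case analysis, while yours is more structural and makes visible \emph{why} order $b+1$ is impossible (the offending monophonic path is squeezed into one of two shapes, each of which is forced to be a geodesic). I checked your deferred final step and it does go through in both configurations: any length-$2$ or length-$3$ shortcut between the endpoints would either use $v^*$ (blocked by the non-adjacencies $v^*x_3$, respectively $v^*x_0$ and $v^*x_4$, coming from inducedness) or stay inside $S$ (blocked because consecutive adjacencies in a disjoint union of cliques would place the endpoints in a common clique, contradicting their non-adjacency). Two small loose ends you should tidy: your dichotomy ``$P\subseteq S$ or $v^*$ is internal'' omits the possibility that $v^*$ is an endpoint of $P$, which is dispatched exactly as in the first branch since the remaining subpath lies in $S$ and still carries three $S$-vertices; and the final shortcut-checking should be written out rather than asserted, though as noted it is routine.
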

	\begin{proof}
		Suppose that $a = \mono(G) < \gp(G) = b$ and that $G$ has order $n$. As $b > a$, $G$ is not a complete graph and so $n \geq b+1$. Let $K$ be a gp-set with order $b$. As $a \not = b$ there must exist vertices $x,y,z \in K$ such that there exists a monophonic $x,z$-path passing through $y$; let $P$ be the shortest such path in $G$. As $P$ is monophonic, there is no edge from $x$ to $z$ and thus $d(x,z) \geq 2$. Any geodesic contains at most two points of $K$, so if $d(x,z) \geq 3$, then this geodesic would contain at least two vertices outside of $K$ and $n \geq b+2$, so we can assume that $d(x,z) = 2$. As $P$ is not a geodesic, the length of $P$ is at least three. We chose $P$ to be a shortest monophonic path containing three points of $K$, so $P$ must contain a vertex outside of $K$. As $P$ is induced, the $x,z$ geodesic is internally disjoint with $P$ and we have exhibited at least two vertices of $G$ not lying in $K$.     	
	\end{proof}

	\begin{table}

		\begin{small}
			\begin{center}
				\begin{tabular}{| c||c|c| c| c| c |c |c| c| c| c|  }
					\hline
					
					$a$/$b$ & 2 & 3 & 4 & 5 & 6 & 7 & 8 & 9 & 10 & 11  \\
					\hline \hline
					2 & 2 (1) & 5 (1) & 7 (1) & 9 (4) &  10 (2) & 12 & 13 & & &\\
					\hline
					3 & - & 3 (1) & 6 (12) & 7 (7) & 8 (2) & 10 (150) & & & &\\
					\hline
					4 & - & - & 4 (1) & 7 (43) & 8 (41) & 9 (8) & 10 (2) & & & \\ 
					\hline
					5 & - & - & - & 5 (1) & 8 (104) & 9 (133) & 10 (62) & 11 & 12 & \\
					\hline
					6 & - & -&- & - & 6 (1) & 9 (219) & 10 (325) & 11 & 12 & 13\\ 
					\hline
					7 &- &- &- &- &- & 7 (1) & 10 (421) & 11 & 12 & 13\\ 
					\hline
					8 &- &- &- &- & -& -& 8 (1) & 11 & 12 & 13 \\ 
					\hline
					9 & - & - & - & -& -& -& -& 9 & 12 & 13 \\
					\hline
					10 & - &- &- &- &- &- &- &- & 10 & 13\\  
					
					\hline
				\end{tabular}
			\end{center}
		\end{small}
		\caption{$\mu (a,b)$ (number of non-isomorphic solutions in brackets)}
		\label{fig:table}
	\end{table}
	
	Table~\ref{fig:table} shows the values of $\mu (a,b)$ for some small $a$ and $b$.  This data is the result of computational work by Erskine~\cite{Erskine}. Lemmas~\ref{chalice} and~\ref{Mas} yield the following upper bounds on $\mu (a,b)$. We conjecture that equality holds for each of these bounds.
	
	\begin{theorem}\label{best upper bound}
		\text{  }
		
		\begin{itemize}
			\item For $a \geq 2$, $\mu (a,a) = a$.
			\item $\mu (2,3) = 5$ and for $b \geq 4$ we have $\mu (2,b) \leq \lceil \frac{3b}{2} \rceil +1  $, with equality for $4 \leq b \leq 8$.
			\item For $3 \leq a < b$ and $\frac{b}{2} \leq a$ we have $\mu (a,b) = b+2$.
			\item For $3 \leq a < \frac{b}{2}$ we have $\mu (a,b) \leq b-a+2 + \lceil \frac{b}{2} \rceil $. 
		\end{itemize}
		
	\end{theorem}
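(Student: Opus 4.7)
My plan is to prove the four parts separately, using the constructions from Lemmas~\ref{Pag}, \ref{chalice} and~\ref{Mas} for the upper bounds and Lemma~\ref{lemma:lower bound} for the matching lower bounds. The first part is immediate from $G = K_a$ together with the trivial inequality $\mu(a,a) \geq a$. For the second part, I would take $G = C_5$ for the case $b = 3$ (so that $\mu(2,3) \leq 5$) and combine it with Lemma~\ref{lemma:lower bound} to secure equality, then for each $b \geq 5$ use $\pag(r)$ if $b = 2r$ and $\pag'(r)$ if $b = 2r-1$ as supplied by Lemma~\ref{Pag}, noting that in both parities the order equals $\lceil 3b/2 \rceil + 1$. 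The case $b = 4$ lies outside the range of Lemma~\ref{Pag} and would be treated by a separate small ad hoc graph, and the equality claim for $4 \leq b \leq 8$ I would cite directly from the computational data in Table~\ref{fig:table}.

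For the third part the lower bound $\mu(a,b) \geq b+2$ is Lemma~\ref{lemma:lower bound}, so only the upper bound needs an explicit construction. I would apply the second half of Lemma~\ref{chalice} with $r = 2a - b$ and $s = b - a$: the hypotheses $b/2 \leq a < b$ yield $r \geq 0$ and $s > 0$, and $a \geq 3$ gives $r + s = a \geq 3$, so the graph $C(r,s)$ has order $r + 2s + 2 = b+2$, mp-number $r + s = a$ and gp-number $r + 2s = b$, as required.

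The fourth part is the least routine. When $3 \leq a < b/2$, the parameter $r = 2a - b$ becomes negative, so one needs to inflate the gp-number with a long pendant path rather than by adding extra leaves. My plan is to work with $C(0, a-1, t)$ and choose $t$ so that $t - \lfloor t/3 \rfloor$ equals $m := b - 2a + 2$. Since $m \geq 3$ and $m \equiv b \pmod 2$, I would take $t = (3m-1)/2$ when $m$ is odd and $t = (3m-2)/2$ when $m$ is even; in the even case one must also observe that this choice is preferable to the alternative solution $t = 3m/2$, as it gives the smaller order. Lemma~\ref{chalice} then supplies $\mono = a$ and $\gp = 2(a-1) + m = b$, and a short parity-split computation shows that the order $2(a-1) + t + 2$ equals exactly $b - a + 2 + \lceil b/2 \rceil$ in both cases.

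The main obstacle I anticipate is keeping the parity bookkeeping straight in the fourth part, together with the ad hoc handling of the case $b = 4$ in the second part; the remaining steps are essentially direct applications of the lemmas already at our disposal.
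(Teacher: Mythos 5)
Your proposal is correct and follows essentially the same route as the paper: $K_a$ for the diagonal, $C_5$ and the pagoda graphs $\pag(r)$, $\pag'(r)$ (plus an ad hoc order-$7$ graph for $b=4$ and the computational data for equality when $4\le b\le 8$) for $a=2$, the graphs $C(2a-b,\,b-a)$ together with Lemma~\ref{lemma:lower bound} for $\frac{b}{2}\le a<b$, and $C(0,a-1,t)$ with $t$ chosen by parity for $a<\frac{b}{2}$. Your parity computation in the last part reproduces exactly the paper's choices $t=\frac{3b+4}{2}-3a$ ($b$ even) and $t=\frac{3b+5}{2}-3a$ ($b$ odd), and the resulting order $2a+t=b-a+2+\lceil\frac{b}{2}\rceil$ checks out in both cases.
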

	\begin{proof}
		If $a = b$ the complete graph $K_a$ has the required properties. The smallest graph with $(a,b) = (2,3)$ is $C_5$ and computer search shows that the graph $H(3)$ with order $7$ from~\cite{ThomasChandranTuite} is a smallest graph with $(a,b) = (2,4)$. For $r \geq 3$ by Lemma~\ref{Pag} we have $\mono(\pag(r)) = 2$, $\gp(\pag(r)) = 2r$ and $\mono(\pag'(r)) = 2$ and $\gp(\pag'(r)) = 2r-1$, which gives the stated upper bound for $a = 2$. 
		
		Let $3 \leq a < b$ and $\frac{b}{2} \leq a$. By Lemma~\ref{chalice} the graph $C(r,s)$ has $\mono(C(r,s)) = r+s$ and $\gp(C(r,s)) = r+2s$. Solving $r+s = a, r+2s= b$ yields the values $r = 2a-b$ and $s = b-a$.  Therefore, as $C(r,s)$ has order $r+2s+2$, we have $\mu (a,b) \leq b+2$. Combined with the lower bound Lemma~\ref{lemma:lower bound} we have equality.
		
		Let $b > 2a$.  For even $b$ consider the graph $C(0,a-1,\frac{3b+4}{2}-3a)$ and for $b$ odd the graph $C(0,a-1,\frac{3b+5}{2}-3a)$. Again by Lemma~\ref{chalice} these graphs have mp-number $a$, gp-number $b$ and order as given in the statement of the theorem.
	\end{proof}
	
	The constructions in this section can be used to partially characterise the possible numbers of vertices of graphs with given mp- and gp- numbers.
	\begin{lemma}\label{intersecting mp and gp sets}
		If there exists a graph $G$ with order $n$, $\mono(G) = a$ and $\gp(G) = b$ with an optimal mp-set $M$ and an optimal gp-set $K$ such that there is an extreme vertex $v \in M \cap K$, then for any $n' \geq n$ there exists a graph $G'$ with order $n'$, $\mono(G') = a$ and $\gp(G') = b$. 
	\end{lemma}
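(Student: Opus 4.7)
The plan is to append a single pendant vertex to the extreme vertex $v$ and iterate. Precisely, let $G^+$ be obtained from $G$ by adding a new vertex $v'$ whose unique neighbour is $v$. I will verify that $\mono(G^+) = a$, $\gp(G^+) = b$, and that $G^+$ again possesses an extreme vertex contained in an optimal mp-set and an optimal gp-set. Iterating this pendant addition $n' - n$ times then yields the required graph of every order $n' \geq n$.

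Two structural observations drive the argument. First, since $v$ is extreme its neighbourhood is a clique, so $v$ cannot be an interior vertex of any induced path or geodesic of $G$: any two neighbours of $v$ lying on such a path would be joined by an edge, producing either a chord or a shortcut. Second, because $v'$ is a pendant in $G^+$, every induced path or geodesic of $G^+$ meeting $v'$ has $v'$ as an endpoint, while every induced path or geodesic of $G^+$ avoiding $v'$ is already one of $G$. The second remark immediately gives the lower bounds $\mono(G^+) \geq a$ and $\gp(G^+) \geq b$, since $M$ and $K$ persist in their respective positions in $G^+$. For the matching upper bounds, let $M^*$ be any optimal mp-set of $G^+$. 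If $v' \notin M^*$ then $M^* \subseteq V(G)$ is already an mp-set of $G$, so $|M^*| \leq a$. Otherwise, within the component of $v$, any third element $x \in M^*$ together with an induced $x,v$-path in $G$ extended by the edge $vv'$ puts $x,v,v'$ onto a common induced path of $G^+$, forcing $v \notin M^*$. The set $M^{\circ} := (M^* \setminus \{v'\}) \cup \{v\}$ then has the same cardinality as $M^*$, and the first remark lets me claim that $M^{\circ}$ is an mp-set of $G$: any three-point witness induced path in $G$ must have $v$ as an endpoint (not an interior vertex), so prepending $v'$ lifts it to a three-point witness for $M^*$ in $G^+$, contradicting the choice of $M^*$. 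Hence $|M^*| \leq a$. The argument for $\gp(G^+) \leq b$ is identical, with ``geodesic'' replacing ``induced path''.

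To continue the induction note that $v'$ is extreme in $G^+$ (its sole neighbour is $v$), and that the sets $M' := (M \setminus \{v\}) \cup \{v'\}$ and $K' := (K \setminus \{v\}) \cup \{v'\}$ are optimal mp- and gp-sets of $G^+$ containing the extreme vertex $v'$, by running the exchange argument in reverse. Thus $G^+$ satisfies the hypotheses of the lemma with order $n + 1$, and repeating the construction $n' - n$ times produces the required graph. The subtle step, and the one that explains why extremality of $v$ is indispensable, is the lifting argument between $G$ and $G^+$: the hypothesis that $v$ is simplicial is exactly what guarantees that $v$ never appears as an interior vertex of an induced-path or geodesic witness, thereby making the swap $v \leftrightarrow v'$ legal at every stage of the iteration.
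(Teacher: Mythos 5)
Your proof is correct and takes essentially the same route as the paper: append a pendant vertex to the extreme vertex $v$, check that the position numbers are unchanged and that the new leaf is again an extreme vertex lying in optimal mp- and gp-sets, and iterate. The only difference is that the paper simply cites the preservation facts from an earlier paper of the authors, whereas you prove them from scratch via the endpoint/lifting argument for induced paths and geodesics through a simplicial vertex.
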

	\begin{proof}
		Suppose that there exists a graph $G$ with $\mono(G) = a$ and $\gp(G) = b$ and order $n$ and that there exist maximal mp- and gp-sets $M$ and $K$ such that $M \cap K$ is non-empty and contains an extreme vertex $v$ of $G$. It is shown in~\cite{ThomasChandranTuite} that if $G^\prime $ is a graph obtained from $G$ by adding a pendant vertex to an extreme vertex of $G$, then $\mono(G^\prime ) = \mono(G)$ and $\gp(G^\prime ) = \gp(G)$. Hence if we add a leaf to $v$ then the resulting graph $G'$ has order $n+1$ and has the same mp- and gp-number as $G$. The new leaf in $G^\prime $ is also an extreme vertex and by~\cite{ThomasChandranTuite} there are largest mp- and gp-sets of $G'$ containing $v$; therefore we can repeatedly add leaves to construct a graph with mp-number $a$, gp-number $b$ and any order $n' \geq n$. 
	\end{proof}

	\begin{corollary}\label{maincorollary}
		\text{  }
		\begin{itemize}
			\item For $a \geq 2$, there exists a graph $G$ with $\mono(G) = \gp(G) = a$ and order $n$ if and only if $a = n = 1$ or $2 \leq a \leq n$.
			\item For $3 \leq a < b$ and $\frac{b}{2} \leq a$ there exists a graph $G$ with $\mono(G) = a$, $\gp(G) = b$ and order $n$ if and only if $n \geq b+2$.
			\item For $3 \leq a < \frac{b}{2}$ there exists a graph $G$ with $\mono(G) = a$, $\gp(G) = b$ and order $n$ if $n \geq b-a+2 + \lceil \frac{b}{2} \rceil $.
			\item There is a graph $G$ with $\mono(G) = 2$ and $\gp (G) = 3$ and order $n$ if and only if $n \geq 5$. For $b \geq 4$ there exists a graph $G$ with $\mono(G) = 2$ and $\gp(G) = b$ and order $n$ if $n \geq 2b+1$.
		\end{itemize}
	\end{corollary}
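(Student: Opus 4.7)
The plan is to combine the extremal constructions from Theorem~\ref{best upper bound} and Lemma~\ref{Pag} with the extension mechanism of Lemma~\ref{intersecting mp and gp sets}, together with the lower bound of Lemma~\ref{lemma:lower bound}. For each bullet I would identify a starting graph of minimum (or near-minimum) order with the prescribed mp- and gp-numbers, locate an extreme vertex lying in both an optimal mp-set and an optimal gp-set, and then invoke Lemma~\ref{intersecting mp and gp sets} to conclude that graphs of every larger order exist; the reverse inequality, when stated, always follows from $n \geq \gp(G)$ or from Lemma~\ref{lemma:lower bound}.

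For Part 1, the complete graph $K_a$ has $\mono(K_a) = \gp(K_a) = a$, every vertex is extreme, and the entire vertex set is the unique optimal mp- and gp-set; hence Lemma~\ref{intersecting mp and gp sets} yields graphs of every order $n \geq a$. For Part 2, I would take the chalice $C(2a-b,\,b-a)$ of Lemma~\ref{chalice} as the starting graph of order $b+2$. The outer leaves of the $K_2$-branches of the underlying starlike tree are extreme in $C(r,s)$ (their neighbourhoods $\{z_i,y\}$ are cliques because $z_i\sim y$ in the join), they lie in the optimal mp-set (the $r+s$ leaves of $T(r,s)$), and they also lie in the optimal gp-set consisting of the $r+2s$ vertices of the $K_1$- and $K_2$-branches. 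Lemma~\ref{intersecting mp and gp sets} then produces graphs of every order $n \geq b+2$, and the matching lower bound comes from Lemma~\ref{lemma:lower bound}. Part 3 is handled identically, using $C(0,\,a-1,\,t)$ with $t$ as in the proof of Theorem~\ref{best upper bound}: the outer $K_2$-leaves are again extreme and simultaneously lie in the optimal mp-set (the leaves of $T(0,a-1,t)$) and in an optimal gp-set formed from all $K_2$-branch vertices together with a maximum independent union of cliques in the $P_t$-branch chosen to include the $K_2$-leaves.

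For Part 4 with $b=3$, I would simply exhibit the family $\{C_n : n \geq 5\}$: deleting any single edge of $C_n$ yields an induced path containing all remaining vertices, so every three-element subset is in an induced path and $\mono(C_n)=2$, while $\{v_0,v_2,v_4\}$ witnesses $\gp(C_n)\geq 3$ and a short case check rules out any gp-set of size $4$. The lower bound $n \geq 5$ is Lemma~\ref{lemma:lower bound}. For $b\geq 4$, the pagoda graphs $\pag(r)$ and $\pag'(r)$ have no extreme vertices, so Lemma~\ref{intersecting mp and gp sets} cannot be applied to them directly. The plan is to modify the relevant pagoda by attaching two carefully chosen pendants (for example one pendant $v$ to $a_1$ and one pendant $v'$ to $a_2$) to create extreme vertices, to verify that these attachments preserve $\mono = 2$ and $\gp = 2r$ (respectively $2r-1$), and to observe that swapping $a_1$ out of the optimal gp-set $A\cup C$ for $v$ produces an optimal gp-set containing the extreme vertex $v$; then Lemma~\ref{intersecting mp and gp sets} extends the construction to every order $n \geq 3r+3$, and this lower threshold is bounded by $2b+1$ for $b \geq 4$.

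The hardest step is the verification for Part 4 with $b\geq 4$: showing that the pendant modification of the pagoda keeps $\mono = 2$ and $\gp = 2r$ requires a case analysis, in the spirit of the chord-avoiding arguments in the proof of Lemma~\ref{Pag}, checking that every three vertices of the modified graph (including triples involving the pendants and vertices from the $B$- and $C$-layers) lie on a common induced path, and that no geodesic of the modified graph contains three vertices of the swapped gp-set. If the direct modification of the pagoda turns out to be technically delicate, an alternative is to build a small ad-hoc base graph of mp-number $2$ and gp-number $b$ that already contains an extreme vertex in a common optimal mp/gp-set, after which the extension is again immediate from Lemma~\ref{intersecting mp and gp sets}.
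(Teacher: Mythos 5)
Your treatment of the first three bullets and of the $b=3$ case coincides with the paper's: cliques and chalice graphs contain an extreme vertex lying in a common optimal mp- and gp-set, Lemma~\ref{intersecting mp and gp sets} then yields all larger orders, the reverse directions come from Lemma~\ref{lemma:lower bound}, and the cycles settle $(a,b)=(2,3)$. The gap is in the fourth bullet for $b\geq 4$. Attaching a pendant $v$ to $a_1$ in $\pag(r)$ does not preserve $\mono=2$: the neighbourhood $N(a_1)=\{b_2,\dots,b_r\}$ is an independent set, and any induced path containing $v$ must have $v$ as an endpoint immediately followed by $a_1$, after which at most one further neighbour of $a_1$ can lie on the path (a second one would either be a third path-neighbour of $a_1$ or create a chord to $a_1$). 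Hence $\{v,b_2,b_3\}$ is a monophonic position set of size three and the modified graph has mp-number at least $3$. This is not a repairable technicality: every vertex of $\pag(r)$ and of $\pag'(r)$ has at least two pairwise non-adjacent neighbours, so the same three-vertex set defeats a pendant attached anywhere, and your fallback of ``a small ad-hoc base graph'' is a placeholder rather than a proof.

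The paper sidesteps the extension lemma entirely at this point. For each $n\geq 2b+1$ it directly constructs a graph $H(n,b)$ of the required order: take a cycle $C_{n-1}$ with vertex set $\mathbb{Z}_{n-1}$, add a hub $x$, and join $x$ to the vertices $0,2,4,\dots,2(b-1)$; the verification that $\mono(H(n,b))=2$ and $\gp(H(n,b))=b$ is quoted from~\cite{ThomasChandranTuite}. To complete your argument you would need to replace the pagoda modification with some such explicit family parametrised directly by $n$, since no graph obtained from a pagoda by adding leaves can retain mp-number two.
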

	\begin{proof}
		Adding leaves to extreme vertices of the chalice graphs and cliques yields the first three results by Lemma~\ref{intersecting mp and gp sets}. The cycles show that there is a graph with mp-number $2$ and gp-number $3$ if and only if $n \geq 5$. For larger gp-numbers Lemma~\ref{intersecting mp and gp sets} does not apply to the small constructions with mp-number $a = 2$. However by making a small adaption to the `half-wheel' graphs from~\cite{ThomasChandranTuite} we can construct graphs with $a = 2$, $b \geq 4$ and any order $n \geq 2b+1$. For $b \geq 4$ and $n \geq 2b+1$ we construct the graph $H(n,b)$ by extending one of the vertices of the `half-wheel' construction from~\cite{ThomasChandranTuite} into a path. Let $C_{n-1}$ be a cycle of length $n-1$, with vertices labelled by the elements of $\mathbb{Z}_{n-1}$ in the natural manner. The graph $H(n,b)$ is obtained by adding an extra vertex $x$ to $C_{n-1}$ and joining $x$ by edges to the vertices $0,2,4,\dots ,2(b-1)$. The proof that this graph has mp-number two and gp-number $b$ follows trivially from~\cite{ThomasChandranTuite}.
	\end{proof}

	\section{Extremal size for graphs with given position numbers}\label{section:Turan}
	
	In this section we investigate the largest possible size of graphs with given mp- or gp-number; it turns out that this problem has interesting connections with both Tur\'{a}n problems and Ramsey theory, which are fundamental areas of extremal graph theory.
	
	\begin{definition}
		For $a \geq 2$ and $n \geq a$ we define $\mex(n;a)$ (respectively $\gex(n;a)$) to be the largest possible size of a graph with order $n$ and mp-number (resp. gp-number) $a$.  
	\end{definition}
	By Theorem~\ref{trees} both of these functions are well-defined. First we determine the asymptotic behaviour of the function $\mex(n;a)$. As any clique in a graph is in monophonic position, we have the lower bound $\mono (G) \geq \omega (G)$. It follows that any graph with mp-number $a$ is $K_{a+1}$-free. The largest possible size of a $K_{a+1}$-free graph was determined by Tur\'{a}n.
	
	\begin{theorem}{\rm \cite{Tur}}
		The number of edges of a $K_{a+1}$-free graph $H$ is at most \[{n-r \choose 2} + (a-1) {r+1 \choose 2},\]
		where $r = \lfloor \frac{n}{a} \rfloor $.  Equality holds if and only if $H$ is isomorphic to the Tur\'{a}n graph $T_{n,a}$, which is the complete $a$-partite graph with every partite set of size $\lfloor \frac{n}{a} \rfloor $ or $\lceil \frac{n}{a} \rceil $.
	\end{theorem}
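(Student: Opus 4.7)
The plan is to prove Tur\'{a}n's theorem by Zykov symmetrisation. Let $H$ be a $K_{a+1}$-free graph on $n$ vertices with the maximum possible number of edges. The goal is to show that $H$ must be complete multipartite with balanced parts, from which both the edge-count formula and the uniqueness will follow.

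The key step is to show that non-adjacency is an equivalence relation on $V(H)$; as reflexivity and symmetry are trivial, only transitivity needs work. Suppose for contradiction that $uv, vw \notin E(H)$ while $uw \in E(H)$. I would split into two cases. If $\deg(u) > \deg(v)$ (or symmetrically $\deg(w) > \deg(v)$), replace $v$ by a \emph{clone} $v'$ of $u$, that is, a new vertex with $N(v') = N(u)$. The clone $v'$ is non-adjacent to $u$, so any $K_{a+1}$ in the modified graph must contain $v'$; swapping $v'$ for $u$ then yields a $K_{a+1}$ in $H$, a contradiction. The modification strictly increases the edge count by $\deg(u)-\deg(v) > 0$, contradicting the extremality of $H$. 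In the remaining case $\deg(v) \geq \deg(u), \deg(w)$, I clone $v$ twice, replacing both $u$ and $w$ by clones $u',w'$ of $v$; the same $K_{a+1}$-free verification applies (noting that $u'$ and $w'$ are non-adjacent, being clones of the non-self-adjacent $v$), and the edge count changes by $2\deg(v)-\deg(u)-\deg(w)+1 \geq 1$, a contradiction once more.

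Transitivity of non-adjacency forces $H$ to be complete multipartite, and being $K_{a+1}$-free bounds the number of non-empty parts by $a$. If there are fewer than $\min\{n,a\}$ non-empty parts, some part has at least two vertices, and pulling one of them out into a new singleton part strictly increases the edge count, contradicting extremality. So $H$ is complete $a$-partite (allowing singleton parts when $n < a$, which recovers $K_n$ for this regime). A standard balancing step then shows that if two part sizes differ by more than one, moving a vertex from the larger part to the smaller strictly increases the edge count; hence all parts have size $\lfloor n/a\rfloor$ or $\lceil n/a\rceil$, i.e.\ $H\cong T_{n,a}$. Direct computation with $r = \lfloor n/a\rfloor$ yields the displayed expression for $e(T_{n,a})$, and the characterisation of equality is built into the argument, since each departure from the Tur\'{a}n graph was shown to admit a strictly edge-increasing modification.

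The main obstacle is the $K_{a+1}$-free verification after cloning: one must check carefully that no new $(a+1)$-clique is introduced. The decisive observation is that a clone of $v$ is non-adjacent to $v$, so any would-be new clique uses at most one of the clone/original pair and can therefore be pulled back to a clique in $H$ by replacement, inheriting $K_{a+1}$-freeness.
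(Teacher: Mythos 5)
Your proposal is a correct and essentially complete proof of Tur\'{a}n's theorem by Zykov symmetrisation, including the uniqueness of the extremal graph. Note, however, that the paper does not prove this statement at all: it is quoted as a classical result with a citation to Tur\'{a}n's 1941 paper, so there is no in-paper argument to compare yours against. On its own terms your argument is sound: the two-case cloning step correctly preserves $K_{a+1}$-freeness (the key point, which you identify, being that a clone is non-adjacent to its original, so any clique meets the clone/original pair at most once and can be pulled back into $H$), and the edge counts $\deg(u)-\deg(v)>0$ and $2\deg(v)-\deg(u)-\deg(w)+1\geq 1$ are right, the $+1$ accounting for the edge $uw$ destroyed when both $u$ and $w$ are removed. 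Because each modification \emph{strictly} increases the size while preserving $K_{a+1}$-freeness, extremality forces non-adjacency to be transitive, and the subsequent part-splitting and balancing steps likewise give strict increases, so every extremal graph is forced to be $T_{n,a}$; this yields the ``if and only if'' in the equality clause, not merely the upper bound. The only remaining content is the routine verification that $\binom{n}{2}-s\binom{r+1}{2}-(a-s)\binom{r}{2}$ with $s=n-ra$ equals the displayed expression $\binom{n-r}{2}+(a-1)\binom{r+1}{2}$, which you rightly describe as a direct computation.
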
 
	
	The size of the Tur\'{a}n graph $T_{n,a}$ will be denoted by $t_{n,a}$. We will show that $\mex(n;a)$ has the same asymptotic behaviour as the largest size of a $K_{a+1}$-free graph; our strategy is to construct a graph with small mp-number by discarding a linear number of cliques from the Tur\'{a}n graph $T_{n,a}$. We need a lemma on the mp-numbers of complete multipartite graphs that extends the result on complete bipartite graphs from~\cite{ThomasChandranTuite}.
	
	\begin{lemma}\label{multipartite}
		For integers $r_1\geq r_2 \geq \dots \geq r_t$ the mp-number and gp-number of the complete multipartite graph $K_{r_1,r_2,\dots ,r_t}$ are given by
		\[ \gp(K_{r_1,r_2,\dots ,r_t}) = \mono(K_{r_1,r_2,\dots ,r_t}) = \max \{ r_1,t\} . \]  
	\end{lemma}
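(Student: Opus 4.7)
The plan is to sandwich both parameters between matching upper and lower bounds. Since any geodesic is induced (otherwise a chord would yield a strictly shorter path), every monophonic position set is automatically a general position set, and so $\mono(G) \leq \gp(G)$ for any graph $G$. It therefore suffices to prove $\mono(K_{r_1,\dots,r_t}) \geq \max\{r_1,t\}$ together with $\gp(K_{r_1,\dots,r_t}) \leq \max\{r_1,t\}$.

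For the lower bound I would exhibit two monophonic position sets. Picking one vertex from each part yields a transversal $T$ that induces $K_t$; since an induced path can meet a clique in at most two vertices (a third would supply a chord between the outer two), $T$ lies in monophonic position and so $\mono \geq t$. For the largest part $V_1$ (of size $r_1$), suppose for contradiction that some induced path $P$ contains three vertices $u, v, w \in V_1$ appearing in this order along $P$. Let $z$ be the vertex immediately after $v$ on $P$; then $z$ is adjacent to $v$ and so must lie in a part other than $V_1$. Hence $u$ and $z$ belong to distinct parts and are therefore joined by an edge of $K_{r_1,\dots,r_t}$, yet they are not consecutive on $P$, contradicting the fact that $P$ is induced. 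Thus $V_1$ is a monophonic position set and $\mono \geq r_1$.

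For the upper bound, let $S$ be any general position set and suppose $S$ contains two vertices $u, v$ from a common part $V_i$. Then for every $w$ lying in some other part the sequence $u, w, v$ is a geodesic of length two (since non-adjacent vertices of a complete multipartite graph are at distance exactly two), and this geodesic contains the three elements $u, w, v$ whenever $w \in S$. So no such $w$ can belong to $S$, giving $S \subseteq V_i$ and $|S| \leq r_i \leq r_1$. Otherwise $S$ meets every part in at most one vertex, so $|S| \leq t$. In either case $|S| \leq \max\{r_1, t\}$, and combining this with the lower bound and the inequality $\mono \leq \gp$ closes the sandwich.

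The only slightly delicate step is the verification that the largest part $V_1$ is an mp-set; everything else reduces to the basic observation that non-adjacent vertices of a complete multipartite graph are joined by length-two geodesics through every vertex in a different part. I do not anticipate any further obstacle.
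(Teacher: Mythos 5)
Your proof is correct and follows essentially the same approach as the paper: the same two lower-bound witnesses (a transversal clique and a largest part) and the same upper-bound observation that two vertices in a common part exclude, via length-two paths, every vertex outside that part. The only difference is organizational—you prove the lower bound for $\mono$ and the upper bound for $\gp$ and close the gap with $\mono \leq \gp$, whereas the paper argues both bounds for $\mono$ and notes the $\gp$ argument is identical; you also spell out the verification that a partite set is an mp-set, which the paper asserts without detail.
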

	\begin{proof}
		Let the partite sets of $K_{r_1,r_2,\dots ,r_t}$ be $W_1,W_2, \dots ,W_t$ and let $M$ be a maximum mp-set of $K_{r_1,r_2,\dots ,r_t}$. Suppose that $M$ contains two vertices $u_1,u_2$ in the same partite set $W$. Then $M$ cannot contain any vertex $v$ in any other partite set, for $u_1,v,u_2$ is a monophonic path. Hence in this case $|M| \leq |W| \leq r_1$. If $M$ contains at most one vertex from every partite set then $|M| \leq t$. For the converse, observe that $K_{r_1,r_2,\dots ,r_t}$ contains a clique of size $t$, so that $|M| \geq t$. Each partite set is also an mp-set, so that $|M| \geq r_1$. The proof for gp-sets is identical.    
	\end{proof}
	
	\begin{corollary}\label{Turan upper bound}
		For $a \leq n \leq a^2$ we have $\mex(n;a) = t_{n,a}$ and $\gex(n;a) =  t_{n,a}$, whilst $\mex(n;a) < t_{n,a}$ and $\gex(n;a) < t_{n,a}$ for $n \geq a^2+1$. 
	\end{corollary}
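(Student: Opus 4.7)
The plan is to combine Lemma~\ref{multipartite} with Tur\'an's theorem, observing that the boundary $n = a^2$ is exactly where the largest part of the Tur\'an graph $T_{n,a}$ overtakes the number of parts.

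First I would compute $\mono(T_{n,a})$ and $\gp(T_{n,a})$ directly from Lemma~\ref{multipartite}. The Tur\'an graph $T_{n,a}$ is the complete $a$-partite graph with parts of size $\lfloor n/a \rfloor$ or $\lceil n/a \rceil$, so Lemma~\ref{multipartite} yields
\[
\mono(T_{n,a}) = \gp(T_{n,a}) = \max\!\left\{\lceil n/a \rceil,\; a\right\}.
\]
For $a \leq n \leq a^2$ we have $\lceil n/a \rceil \leq a$, hence both parameters equal $a$. Thus $T_{n,a}$ itself is a graph of order $n$ and mp-number (and gp-number) exactly $a$, giving the lower bounds $\mex(n;a),\gex(n;a) \geq t_{n,a}$.

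For the matching upper bound, I would use the standard fact that any clique is both an mp-set and a gp-set, so $\omega(G) \leq \mono(G)$ and $\omega(G) \leq \gp(G)$. Therefore any graph $G$ with $\mono(G) = a$ or $\gp(G) = a$ is $K_{a+1}$-free, and Tur\'an's theorem bounds its size by $t_{n,a}$. This gives equality in the range $a \leq n \leq a^2$.

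For $n \geq a^2+1$ I would argue by contradiction: if some $K_{a+1}$-free graph $G$ of order $n$ had $t_{n,a}$ edges and $\mono(G) = a$ (or $\gp(G) = a$), the uniqueness clause of Tur\'an's theorem would force $G \cong T_{n,a}$. But in this range $\lceil n/a \rceil \geq a+1 > a$, so by the computation above $\mono(T_{n,a}) = \gp(T_{n,a}) = \lceil n/a \rceil > a$, a contradiction. Hence $\mex(n;a), \gex(n;a) < t_{n,a}$. The proof is essentially an assembly step; no real obstacle arises since Lemma~\ref{multipartite} and Tur\'an's theorem together do all the work, and the threshold $n = a^2$ is simply the solution of $\lceil n/a \rceil = a$.
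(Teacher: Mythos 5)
Your proposal is correct and follows essentially the same route as the paper: both arguments combine the fact that a clique is in monophonic (and general) position with Tur\'an's theorem and its uniqueness clause, and then use Lemma~\ref{multipartite} to evaluate $\mono(T_{n,a})=\gp(T_{n,a})=\max\{\lceil n/a\rceil,a\}$, with the threshold $n=a^2$ arising exactly as you describe.
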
 
	\begin{proof}
		If a graph contains a clique of size $\geq a+1$, then it will have mp- and gp-number $\geq a+1$. Therefore any graph with mp- or gp-number $a$ is $K_{a+1}$-free and it follows from Tur\'{a}n's Theorem that $\gex(n;a) \leq t_{n,a}$ and $\mex(n;a) \leq t_{n,a}$. As the Tur\'{a}n graph is the unique extremal graph, we have equality if and only if the Tur\'{a}n graph has mp-number $a$. By Lemma~\ref{multipartite}, $T_{n,a}$ does have mp-number $a$ for $a \leq n \leq a^2$, but for $n \geq a^2+1$ $T_{n,a}$ has mp- and gp-number $\lceil \frac{n}{a} \rceil \geq a+1$. 	
	\end{proof} 
	
	We now present the main theorem of this section.
	
	\begin{theorem}\label{turantheorem}
		For $a \geq 2$ and $n \geq a^2+1$ we have
		\[ t_{n,a}-\left \lfloor \frac{n}{a} \right \rfloor {a \choose 2} \leq \mex(n;a) \leq t_{n,a}-1. \]
		Thus $\mex(n;a) \sim (1-\frac{1}{a})\frac{n^2}{2}$.
	\end{theorem}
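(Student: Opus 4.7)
The upper bound $\mex(n;a) \leq t_{n,a}-1$ is an immediate consequence of Corollary~\ref{Turan upper bound} together with the equality case of Tur\'{a}n's theorem: any graph $G$ with $\mono(G) = a$ is $K_{a+1}$-free since $\omega(G) \leq \mono(G)$, so $|E(G)| \leq t_{n,a}$, and equality would force $G = T_{n,a}$, contradicting Lemma~\ref{multipartite}, which gives $\mono(T_{n,a}) = \lceil n/a \rceil > a$ whenever $n \geq a^2+1$.

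For the lower bound, the plan is to exhibit an explicit extremal graph $G$. I would start from $T_{n,a}$ and delete the edge set of $q = \lfloor n/a \rfloor$ pairwise vertex-disjoint copies of $K_a$, say $D_1, \dots, D_q$, each containing exactly one vertex from every partite set; such a packing exists because each partite set has at least $q$ vertices. This removes precisely $q\binom{a}{2}$ edges, delivering the required edge count. The bound $\mono(G) \geq a$ is then easy: when $n \geq a^2+1$ we have $q \geq a$, so choosing one vertex from each of $D_1, \dots, D_a$ (one per partite set) produces a $K_a$ in $G$, and any clique is a monophonic position set.

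The core of the argument is $\mono(G) \leq a$. I plan to argue by contradiction: suppose $S$ is an mp-set of $G$ with $|S| \geq a+1$. By pigeonhole some partite set $W$ contains two elements $u, v \in S$. Write $g_u, g_v$ for the deleted cliques containing $u, v$ (understood to be empty if a vertex is a ``leftover'' lying in no $D_k$). The key observation is that for any $z \in V(G) \setminus W$ the path $u, z, v$ is an induced $P_3$ in $G$ precisely when $z \notin g_u \cup g_v$. Hence if some third element of $S$ lies outside $W \cup g_u \cup g_v$ we are done. Otherwise $S \setminus W \subseteq g_u \cup g_v$. When $a \geq 3$ and $|S \cap W| = 2$ we have $|S \setminus W| \geq a-1 \geq 2$, so a second pigeonhole step yields two such elements $z, z'$ in a common deleted clique (say $g_u$); then $zz'$ is deleted while both are adjacent to $v \in g_v$, so $z, v, z'$ is an induced $P_3$ with three elements of $S$. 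When instead $|S \cap W| \geq 3$, I construct an induced path $u, x, v, y, w$ of length four through three vertices $u, v, w \in S \cap W$, taking $x, y$ to be the unique representatives of $g_w, g_u$ in some other partite set $W'$; the induced structure then follows directly from the fact that non-adjacency in $G$ is exactly being in the same partite set or the same deleted clique.

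The main obstacle will be the boundary cases of this argument: leftover vertices (where the ``deleted clique'' is empty and the key observation and length-four construction have to be rephrased using that such a vertex retains all its Tur\'{a}n edges), and especially the degenerate case $a = 2$, where the inner pigeonhole collapses and an induced path through $u$, $v$ and the unique third vertex of $S$ has to be built by hand --- exploiting that for $n \geq 5$ the other partite set has at least three vertices, leaving room for an induced path of length three. Once $\mono(G) \leq a$ is in place, the asymptotic $\mex(n;a) \sim (1 - 1/a)\frac{n^2}{2}$ is immediate, as $t_{n,a} = (1-1/a)\frac{n^2}{2} + O(n)$ while the correction term $\lfloor n/a \rfloor \binom{a}{2}$ is only linear in $n$.
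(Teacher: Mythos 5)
Your construction is exactly the paper's: delete $\lfloor n/a\rfloor$ pairwise disjoint transversal copies of $K_a$ from $T_{n,a}$, count edges, and get $\mono\geq a$ from a system of distinct representatives of the deleted cliques. Your upper bound via Tur\'{a}n's theorem plus Lemma~\ref{multipartite} is also the paper's Corollary~\ref{Turan upper bound}. The difference lies in how you establish $\mono(G)\leq a$, and there your case analysis has a genuine gap at $a=3$. After reducing to $S\setminus W\subseteq g_u\cup g_v$, you invoke a ``second pigeonhole step'' to find two elements of $S\setminus W$ in a common deleted clique; but $|S\setminus W|\geq a-1$ elements distributed over the two cliques $g_u,g_v$ only forces two into the same clique when $a-1\geq 3$. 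For $a=3$ with $|S\cap W|=2$ the split can be one--one. Concretely, with partite sets $T_1,T_2,T_3$ and deleted cliques $D_j=\{u_{1j},u_{2j},u_{3j}\}$, the set $S=\{u_{11},u_{12},u_{21},u_{32}\}$ has $u_{21}\in g_{u_{11}}$ and $u_{32}\in g_{u_{12}}$, so it falls into neither of your subcases ($|S\cap W|=2$ but no two elements of $S\setminus W$ share a deleted clique, and no element of $S$ lies outside $W\cup g_u\cup g_v$). This $S$ is indeed not in monophonic position --- its induced subgraph is already a $P_4$ --- but your argument as written does not detect it.

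The fix is essentially what the paper does: rather than splitting on where the third element lands, exhibit a single induced five-vertex path $u_{ij},\,u_{i'k},\,u_{il},\,u_{i'j},\,u_{ik}$ (for any $i'\neq i$ and any third index $l\notin\{j,k\}$, available since $\lfloor n/a\rfloor\geq 3$ outside the case $n=5,a=2$). This one path simultaneously shows that once $u_{ij},u_{ik}\in M$, no representative of $g_{u_{ij}}$ or $g_{u_{ik}}$ in any other partite set can join $M$, and no third vertex of $T_i$ can either; combined with your $P_3$ observation it forces $|M|\leq 2$ in this case. Your length-four path for $|S\cap W|\geq 3$ is exactly this path in disguise, so you have the right tool --- you just need to deploy it in the $|S\cap W|=2$ case as well instead of the faulty pigeonhole. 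The leftover-vertex and $a=2$ boundary issues you flag are real but minor and are handled the same way; the asymptotic conclusion is fine.
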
   
	\begin{proof}
		Take the Tur\'{a}n graph $T_{n,a}$ and label the partite sets $T_1, T_2, \dots , T_a$, where $|T_i| = \lceil \frac{n}{a} \rceil $ for $1 \leq i \leq s$ and $|T_i| = \lfloor \frac{n}{a} \rfloor $ for $s+1 \leq i \leq a$, where $s = n-\lfloor \frac{n}{a}\rfloor a$. For $1 \leq i \leq a$ we denote the vertices of $T_i$ by $u_{ij}$, where $1 \leq j \leq \lceil \frac{n}{a} \rceil $ if $i \leq s$ and $1 \leq j \leq \lfloor \frac{n}{a} \rfloor $ if $s+1 \leq i \leq a$.
		
		For each $j$ in the range $1 \leq j \leq \lfloor \frac{n}{a} \rfloor $ delete the edges of the clique of order $a$ on the vertices $u_{ij}$, $1 \leq i \leq a$, from $T_{n,a}$. This yields a new graph $T^*_{n,a}$ with size $t_{n,a} - \lfloor \frac{n}{a} \rfloor {a \choose 2}$.    
		
		Considering the vertices $u_{ii}$ for $1 \leq i \leq a$, we see that $T^*_{n,a}$ contains a clique of size $a$, so certainly $\mono(T^*_{n,a}) \geq a$.  For the converse, let $M$ be a largest mp-set of $T^*_{n,a}$.  Suppose that $M$ contains two vertices $u_{ij}$ and $u_{ik}$ from the same partite set $T_i$. Then $M$ cannot contain a vertex $u_{i'j'}$ from a different partite set $T_{i'}$ where $j' \not \in \{ j,k\} $, as $u_{ij},u_{i'j'},u_{ik}$ is a monophonic path. We have $\mex(5;2) = 5$, so the lower bound holds for $n = 5$ and $a = 2$; otherwise for $n \geq a^2+1$ we have $\lfloor \frac{n}{a} \rfloor \geq 3$.  Hence let $1 \leq l \leq \lfloor \frac{n}{a} \rfloor$ and $l \not \in \{ j,k\} $. Then for any $i' \in \{ 1,2,\dots , a\} - \{ i\} $ the path $P = u_{ij},u_{i'k},u_{il},u_{i'j},u_{ik}$ is monophonic, so $M \subseteq V(T_i)$.  However the path $P$ shows that no three points of the partite set $T_i$ can all lie in $M$, so that $|M| \leq 2$. Therefore we can assume that any optimal mp-set has at most one point in each partite set, so that $\mono(T^*_{n,a}) \leq a$, completing the proof.	
	\end{proof}  
	In general the construction of Theorem~\ref{turantheorem} is not optimal; for example $\mex (10,3) = 31$, whereas Theorem~\ref{turantheorem} gives a lower bound of 24. We now give an exact formula for $\mex(n;2)$.
	
	\begin{theorem}
		For $n \geq 6$ we have $\mex(n;2) = \left \lceil \frac{(n-1)^2}{4} \right \rceil $. For odd $n$ the unique extremal graph is given by $T_{n,2}^*$, whilst for even $n$ the unique extremal graph is $T_{n,2}^*$ with one edge added between the partite sets. 
	\end{theorem}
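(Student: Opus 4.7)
For the lower bound I take the graph $T^*_{n,2}$ introduced in Theorem~\ref{turantheorem}, which has size $t_{n,2}-\lfloor n/2\rfloor$ and mp-number $2$. A short arithmetic check shows $t_{n,2}-\lfloor n/2\rfloor=\lceil(n-1)^2/4\rceil$ when $n$ is odd, and is one less than this when $n$ is even; in the even case I restore any single deleted matching edge to form $T^*_{n,2}+e$ of the desired size. The argument of Theorem~\ref{turantheorem} adapts to show $T^*_{n,2}+e$ still has mp-number $2$: the monophonic path $u_{ij},u_{i'k},u_{il},u_{i'j},u_{ik}$ used there requires three distinct column-indices $j,k,l$, and since $n\ge 6$ these can always be chosen so as to avoid the index of the restored matching edge.

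For the upper bound let $G$ have order $n\ge 6$, mp-number $2$ and size $m$; by Corollary~\ref{Turan upper bound}, $G$ is triangle-free. I would first argue that $G$ must be bipartite: otherwise $G$ contains an induced odd cycle of length $\ge 5$, and combining triangle-freeness with the mp-condition applied to triples of cycle vertices (and vertices attached to the cycle) forces $m<\lceil(n-1)^2/4\rceil$. Assume $G$ is bipartite with parts $A,B$ of sizes $a\le b$; the small cases $a\le 2$ follow from a direct count, so suppose $a\ge 3$. The key structural lemma is that no two vertices of the same partite set share the same neighbourhood: if $x,x'\in A$ satisfy $N(x)=N(x')$ and $x''\in A\setminus\{x,x'\}$, then in any induced path $P$ covering $\{x,x',x''\}$ (which must alternate between $A$ and $B$) every $B$-neighbour of $x'$ on $P$ is also a neighbour of $x$, and a case analysis on the placement of $x$ and $x'$ in $P$ shows this inevitably produces a chord at $x$, contradicting induced-ness.

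Since the $b$ sets $\{N(v):v\in B\}$ are therefore distinct subsets of $A$, the number of non-edges between $A$ and $B$ satisfies
\begin{equation*}
\bar m=\sum_{v\in B}(a-|N(v)|)\ge b-1,
\end{equation*}
with equality attained precisely when the $b$ neighbourhoods are $A$ together with $A\setminus\{a_i\}$ for $b-1$ distinct choices of $a_i\in A$. Thus $m=ab-\bar m\le(a-1)b+1$, which subject to $a+b=n$ and $a\le b$ is maximized at $a=\lfloor n/2\rfloor$ and equals $\lceil(n-1)^2/4\rceil$. For uniqueness, equality throughout forces $\{a,b\}=\{\lfloor n/2\rfloor,\lceil n/2\rceil\}$ together with the extremal neighbourhood pattern above, which up to isomorphism yields $T^*_{n,2}$ in the odd case and $T^*_{n,2}+e$ in the even case. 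The main obstacle I anticipate is the reduction to the bipartite case, which requires a careful analysis of how an induced odd cycle and its external neighbours interact with the mp-condition.
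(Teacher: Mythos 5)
Your lower bound and your treatment of the bipartite case are essentially sound. In fact your ``distinct neighbourhoods'' lemma is a nice strengthening of the paper's counting step (the paper only argues that at most one vertex of the larger part can be adjacent to the whole of the other part, and separately that the missing edges form a matching); your version delivers the bound $m\le (a-1)b+1$ and the extremal structure in one stroke, and the argument that two equal-neighbourhood vertices in a part of size $\ge 3$ form an mp-set with any third vertex of that part is correct.

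The genuine gap is exactly where you flag it: the reduction to the bipartite case. This cannot be waved through, because triangle-freeness plus an edge count does \emph{not} force bipartiteness at the relevant size. Concretely, the blow-ups $C_5[r-1,k,1,1,r-k]$ are non-bipartite, triangle-free, have order $2r+1$ and size $r^2+1$, which \emph{exceeds} the target $\lceil(n-1)^2/4\rceil=r^2$; and for even order the blow-ups $C_5[r-2,k,1,1,r-k]$ and $C_5[r-1,k,1,1,r-k-1]$ have size $r^2-r+1$, which \emph{equals} the target. So your hoped-for conclusion ``non-bipartite $\Rightarrow m<\lceil(n-1)^2/4\rceil$'' is false as stated, and any correct argument must use the mp-condition to kill precisely these near-extremal non-bipartite graphs, which in turn requires knowing what they are. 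The paper does this by importing two external results: Brouwer's stability theorem (a triangle-free graph of order $2r$ with $\ge r^2-r+2$ edges, or order $2r+1$ with $\ge r^2+2$ edges, is bipartite) and the Amin--Faudree--Gould--Sidorowicz classification of the non-bipartite triangle-free graphs one edge below that threshold as $C_5$-blow-ups, whose large independent partite sets are mp-sets. Even then, the odd case at size exactly $r^2$ sits \emph{two} edges below the stability threshold, and the paper needs an additional vertex-deletion argument (remove a minimum-degree vertex $x$, apply stability/classification to $G-x$, and analyse how $x$ attaches) to conclude bipartiteness for $r\ge 6$, with small cases checked by computer. None of this is present in your sketch, and an ad hoc analysis of ``an induced odd cycle and its external neighbours'' is unlikely to substitute for it without effectively reproving the classification.
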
 
	\begin{proof}
		It is easily verified that for even $n$ the graph formed from the construction in Theorem~\ref{turantheorem} by adding an extra edge between the partite sets (i.e. the graph $K_{r,r}$ minus a matching of size $r-1$) has mp-number two, so that $\mex(2r;2) \geq r^2-r+1$ and $\mex(2r+1;2) \geq r^2$. Computer search confirms that these are the unique extremal graphs for $5 \leq n \leq 12$~\cite{Erskine}. 
		
		Now we prove the upper bound. The stability result of~\cite{Bro} states that for $n \geq 2a+1$ any $K_{(a+1)}$-free graph with $\geq t_{n,a}-\lfloor \frac{n}{a} \rfloor +2$ edges must be $a$-partite. In particular, for $n \geq 5$ any triangle-free graph with order $2r$ and size $\geq r^2-r+2$ or order $2r+1$ and size $\geq r^2+2$ must be bipartite. Let $G$ be any bipartite graph with partite sets $X$ and $Y$, where $|X| \geq |Y|$, and monophonic position number two. Suppose that there are $\geq 2$ vertices $x_1,x_2$ in $X$ that are connected to every vertex of $Y$. Then for any $x_3 \in X- \{ x_1,x_2\} $ the set $\{ x_1,x_2,x_3\} $ is in monophonic position. It follows that at most one vertex of $X$ is adjacent to every vertex of $Y$, so that the size of $G$ is at most $\lfloor \frac{n^2}{4}\rfloor - \lceil \frac{n}{2} \rceil + 1$. Therefore $\mex(2r;2) = r^2-r+1$ and $r^2 \leq \mex(2r+1;2) \leq r^2+1$. 
		
		Suppose that $G$ is a graph with order $n = 2r+1$, size $r^2+1$ and mp-number two. By the previous argument $G$ is not bipartite. For any graph $H$ with $k$ vertices $v_1,v_2,\dots ,v_k$ and positive integers $n_1,n_2,\dots ,n_k$ we let $H[n_1,n_2,\dots ,n_k]$ denote the blow-up graph obtained from $H$ by replacing the vertex $v_i$ by a set $V_i$ of $n_i$ vertices and joining every vertex in $V_i$ to every vertex in $V_j$ if and only if $v_i \sim v_j$ in $H$. It is shown in~\cite{AminFauGouSid} that the only non-bipartite triangle-free graphs with order $2r+1$ and size $r^2+1$ are given by the blow-up $C_5[r-1,k,1,1,r-k]$ of the 5-cycle, where $1 \leq k \leq r-1$. The union of the partite sets with size $k$ and $r-k$ in this expanded graph is in monophonic position, so for $r \geq 3$ no such graph exists. Hence $\mex(2r+1;2) = r^2$ for $r \geq 3$.   
		
		We now classify the extremal graphs. Again by~\cite{AminFauGouSid} the only non-bipartite triangle-free graphs with order $2r$ and size $r^2-r+1$ are the blow-ups $C_5[r-2,k,1,1,r-k]$ ($1 \leq k \leq r-1$) and $C_5[r-1,k,1,1,r-k-1]$ ($1 \leq k \leq r-2$), which both have mp-number $\geq 3$ for $r \geq 3$, so any extremal graph must be bipartite. Applying our counting argument for bipartite graphs shows that both parts of the bipartition must have size $r$ and there is a matching of size $r-1$ missing between $X$ and $Y$; hence the graph $T_{2r,2}^*$ with an edge added between the partite sets is the unique extremal graph. 
		
		Now let $G$ be a graph with order $n = 2r+1$, mp-number two and size $r^2$. For $n = 7, 9$ or $11$ the result can be shown by computer search~\cite{Erskine} or a slightly more involved argument, so we assume that $r \geq 6$. First we show that $G$ must be bipartite. Let $\delta $ be the minimum degree of $G$ and $x$ be a vertex of $G$ with degree $\delta $. There must be a vertex of degree $\leq r-1$ in $G$, for otherwise the size of $G$ would be at least $\frac{1}{2}(2r+1)r > r^2$. Thus $\delta \leq r-1$. Then $G' = G-x$ is a triangle-free graph with order $2r$ and size $r^2-\delta \geq r^2-r+1$. It follows that either $G'$ is bipartite, or $\delta = r-1$ and $G'$ is isomorphic to one of the blow-ups $C_5[r-2,k,1,1,r-k]$ ($1 \leq k \leq r-1$) or $C_5[r-1,k,1,1,r-k-1]$ ($1 \leq k \leq r-2$). 
		
		Suppose that $G'$ is bipartite with bipartition $(X',Y')$, where we make no assumption about the relative sizes of $X'$ and $Y'$. Write $a = |N_G(x) \cap X'|$ and $b = |N_G(x) \cap Y'|$, where $a+b= \delta $. If either $a = 0$ or $b = 0$, then $G$ is bipartite. $G'$ has size at most $|X'||Y'| \leq r^2$ and, as $G$ is triangle-free, there are at least $ab$ edges missing between $N_G(x) \cap X'$ and $N_G(x) \cap Y'$, so the size $r^2$ of $G$ is bounded above by $r^2+a+b-ab$. It follows that $ab \leq a+b$, so that either $a = 1$ or $b = 1$, or else $a = 2,b = 2$. If $a = 2, b = 2$, then we have equality in the preceding bound, so that $G'$ must be isomorphic to the complete bipartite graph $K_{r,r}$ with the four edges between $N_G(x) \cap X'$ and $N_G(x) \cap Y'$ deleted, in which case the neighbourhood $N(x)$ of $x$ is a set of four vertices in monophonic position, a contradiction. Hence without loss of generality we can assume that $a = 1$; let $x'$ be the neighbour of $x$ in $X'$.  As there are $b$ edges missing between $N_G(x) \cap X'$ and $N_G(x) \cap Y'$, the size of $G$ is bounded above by $|X'||Y'|+1$; thus if either set $X'$ or $Y'$ has size $\leq r-2$, then the size of $G$ would be too small, so we can take $|X'| \geq r-1$. As before, at most one vertex in $X'-\{ x'\} $ can be adjacent to every vertex of $Y'$, so there are at least a further $r-3$ edges missing between $X'-\{ x'\} $ and $Y'$, implying that $r^2 \leq r^2-r+4$, or $r \geq 4$. This shows that for $r \geq 5$ if the graph $G'$ is bipartite, then $G$ is bipartite.     
		
		Now consider the case $\delta = r-1$ and $G' = C_5[r-2,k,1,1,r-k]$.  There must be at least one edge in $G$ from $x$ to the partite set of size $r-2$ in $G'$, for otherwise this set of $r-2$ vertices would be in monophonic position in $G$. As $G$ is triangle-free, this means that $x$ has no edges to the sets of size $k$ or $r-k$. Both of these sets will thus constitute mp-sets in $G$, so that $k\leq 2$ and $r-k \leq 2$, so that $r \leq 4$. Similar reasoning shows that $G'$ cannot be isomorphic to the blow-up $C_5[r-1,k,1,1,r-k-1]$ for $r \geq 6$. Therefore for $r \geq 6$ we can assume that $G$ is bipartite with bipartition $(X,Y)$, where $|X| > |Y|$. 
		
		Set $t = |X|$. Our bipartite counting argument shows that at most one vertex of $X$ is adjacent to every vertex of $Y$ and so there are at most $f(t) = t(2r+1-t)-(t-1) = -t^2+2rt+1$ edges in $G$. We have $f(r+1) = r^2$ and $f$ is decreasing for $t \geq r$, so $|X| = r+1$ and, as we have equality in the bound, exactly $r$ vertices in $X$ have one edge missing to $Y$, with the remaining vertex of $X$ adjacent to every vertex of $X$. Suppose that two vertices of $X$ are non-adjacent to the same vertex of $Y$, say $x_1 \not \sim y_1$ and $x_2 \not \sim y_1$; then for any $x_3 \in X-\{ x_1,x_2\} $ the set $\{ x_1,x_2,x_3\} $ would be in monophonic position. It follows that the missing edges between $X$ and $Y$ constitute a matching of size $r$ in the complement of $G$ and we recover our construction $T_{2r+1,2}^*$. \end{proof}
	We turn now to the largest size of graphs with given general position number. As the only graphs with gp-number two are $C_4$ and paths of length $\geq 2$ we trivially have $\gex(4;2) = 4$ and $\gex(n;2) = n-1$ for $n \geq 5$. A graph with order $n = 10$, general position number $3$ and largest size is shown in Figure~\ref{gex graph}. In contrast to the quadratic size of extremal graphs with given mp-number, the function $\gex(n;a)$ is $O(n)$. This can be shown by a simple upper bound on the maximum degree of such a graph that comes from Ramsey theory. The Ramsey number $R(s,t)$ is the smallest value of $n$ such that any graph with order $n$ contains either a clique of size $s$ or an independent set of size $t$; taking the converse of the extremal graphs we trivially have the symmetry $R(s,t) = R(t,s)$.
	
	\begin{theorem}\label{Ramsey upper bound}
		For $a \geq 3$ the function $\gex(n;a)$ is bounded above in terms of the Ramsey number $R(a,a+1)$ by $\gex(n;a) \leq \frac{R(a,a+1)-1}{2}n$.
	\end{theorem}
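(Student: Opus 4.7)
The plan is to bound the maximum degree of any graph $G$ with $\gp(G)=a$ by $R(a,a+1)-1$, and then apply the handshake lemma. Let $v$ be a vertex of $G$ with $\deg(v) = \Delta(G)$, and suppose for contradiction that $\deg(v) \geq R(a,a+1)$. Restricting to $N(v)$, which has at least $R(a,a+1)$ vertices, the definition of the Ramsey number guarantees that the subgraph induced on $N(v)$ contains either a clique of size $a$ or an independent set of size $a+1$.

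In the first case, the clique of size $a$ in $N(v)$ together with $v$ forms a clique $K$ of size $a+1$ in $G$; since no geodesic of $G$ can contain three mutually adjacent vertices, $K$ is in general position, contradicting $\gp(G)=a$. In the second case, let $I \subseteq N(v)$ be an independent set with $|I|=a+1$. I claim $I$ itself is a gp-set. Indeed, every pair of vertices in $I$ is at distance exactly $2$ (adjacent via $v$ but non-adjacent in $G$). If some geodesic $P$ of $G$ contained three vertices $u_1,u_2,u_3 \in I$ with $u_2$ between $u_1$ and $u_3$ on $P$, then $d(u_1,u_3) = d(u_1,u_2) + d(u_2,u_3) = 2+2 = 4$, contradicting $d(u_1,u_3)=2$. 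Hence $I$ is a gp-set of size $a+1$, again contradicting $\gp(G)=a$.

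Thus $\Delta(G) \leq R(a,a+1)-1$, and summing over all vertices we obtain
\[ 2m = \sum_{v \in V(G)} \deg(v) \leq n\bigl(R(a,a+1)-1\bigr), \]
which gives the claimed bound $\gex(n;a) \leq \tfrac{R(a,a+1)-1}{2}n$. The only step that requires any care is verifying that the independent set $I$ is in general position; this is where the hypothesis $a \geq 3$ is implicitly used, since the Ramsey-theoretic dichotomy and the gp-set argument both rely on being able to produce a set of size $a+1 \geq 4$, and the distance argument needs at least three points. Everything else is a direct application of Ramsey's theorem and double counting.
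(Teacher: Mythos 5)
Your proposal is correct and follows essentially the same route as the paper: bound the maximum degree by $R(a,a+1)-1$ via Ramsey's theorem applied to the neighbourhood of a vertex, observing that a clique of size $a+1$ or an independent set of size $a+1$ inside a common neighbourhood is in general position, then apply the handshake lemma. Your explicit verification that the independent set is a gp-set (all pairwise distances equal to $2$) is a detail the paper leaves implicit, but the argument is the same.
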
 
	\begin{proof}
		Let $G$ be a graph with order $n$, gp-number $a$, size $\gex(n;a)$ and maximum degree $\Delta $. Suppose that $G$ has a vertex $x$ with degree $d(x) \geq R(a,a+1)$ and let $X$ be the subgraph induced by $N(x)$. Then $X$ contains either a clique of order $a$, which together with $x$ would give a clique of size $a+1$, or else $X$ has an independent set of size $a+1$; either of these sets constitutes a general position set with more than $a$ vertices. Thus $\Delta \leq R(a,a+1)-1$ and $\gex(n;a) \leq \frac{R(a,a+1)-1}{2}n$.
	\end{proof}
	It seems unlikely that the bound in Theorem~\ref{Ramsey upper bound} is tight; improving this bound is an interesting problem.

	\begin{figure}
		\centering
		\begin{tikzpicture}[x=0.4mm,y=-0.4mm,inner sep=0.2mm,scale=0.6,very thick,vertex/.style={circle,draw,minimum size=10,fill=lightgray}]
			\node at (70,0) [vertex] (x1) {$$};
			\node at (100,-100) [vertex] (x2) {$$};
			\node at (-100,-100) [vertex] (x3) {$$};
			\node at (-70,0) [vertex] (x4) {$$};
			\node at (-100,100) [vertex] (x5) {$$};
			\node at (100,100) [vertex] (x6) {$$};
			
			\node at (0,-140) [vertex] (y) {$$};
			\node at (0,0) [vertex] (z) {$$};
			
			\node at (0,-170) [vertex] (y') {$$};
			\node at (0,70) [vertex] (z') {$$};
			
			\path
			(x1) edge (x2)
			(x2) edge (x3)
			(x3) edge (x4)
			(x4) edge (x5)
			(x5) edge (x6)
			(x6) edge (x1)
			(x1) edge (x3)
			(x2) edge (x4)
			(x3) edge (x5)
			(x4) edge (x6)
			(x5) edge (x1)
			(x6) edge (x2)

			(z) edge (x2)
			(z) edge (x3)
			(z) edge (x5)
			(z) edge (x6)
			(x1) edge[bend right] (y)
			(x4) edge[bend left] (y)
			(y) edge (y')
			(z) edge (z')
			
			;
		\end{tikzpicture}
		\caption{A graph with order 10, gp-number 3 and largest size}
		\label{gex graph}
	\end{figure}
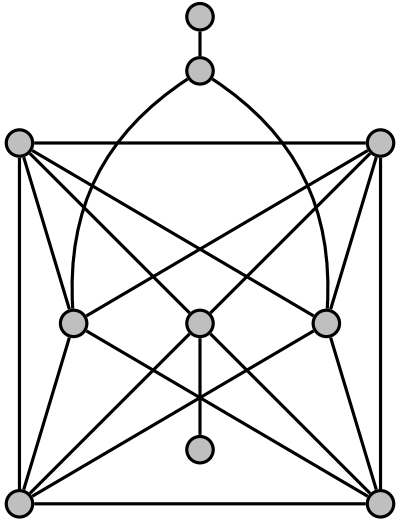

	We now briefly consider the problem of the smallest size of graph with given position numbers. Finding the smallest size of a graph with given mp- or gp-number is trivial by Theorem~\ref{trees}; however, if we specify both the mp- and gp-number this problem becomes more difficult. We will denote the smallest possible size of a graph $G$ with order $n$, $\mono (G) = a$ and $\gp(G) = b$ by $ex^-(n;a;b)$; by Corollary~\ref{maincorollary} this number exists for sufficiently large $n$. Trivially $ex^-(a;a;a) = {a \choose 2}$ and $ex^-(n;a;a) = n-1$ for $n \geq a+1$.  Also using cycles we see that $ex^-(n;2;3) = n$ for $n\geq 5$. For other values of $a$ and $b$ we conjecture that the following construction is extremal. 
	
	\begin{theorem}\label{smallest size}
		If $b$ and $a$ have the same parity, then for $n \geq \frac{5b-3a}{2}+4$ we have $ex^-(n;a;b) \leq n+\frac{b-a}{2}+1$.	If $a$ and $b$ have opposite parities, then for $n \geq \frac{5b-3a+11}{2}$ we have $ex^-(n;a;b) \leq n+\frac{b-a+3}{2}$. 
	\end{theorem}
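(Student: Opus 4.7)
The bound is realised constructively: for each parity case I would exhibit an explicit family of graphs $G(n;a,b)$ whose size matches the claimed upper bound. The overall shape is a tree $T$ with $b$ leaves (so that $\gp(T)=\mono(T)=b$ by Theorem~\ref{trees}), to which a small number of chords are added to lower the monophonic position number to exactly $a$ without destroying the $b$-element gp-set consisting of the leaves.

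\textbf{Step 1 (base tree).} Begin with a spider or caterpillar with exactly $b$ leaves and an internal backbone path long enough to absorb the order $n$; the tree accounts for $n-1$ of the edges. Arrange the internal vertices so that pendants are attached at well-separated positions along the backbone.

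\textbf{Step 2 (chord gadget).} Add $\tfrac{b-a}{2}+2$ extra edges when $a$ and $b$ have the same parity, or $\tfrac{b-a+3}{2}+1$ extra edges when the parities differ. Each added chord is placed so as to promote an induced $P_3$ through a pair of leaves to a longer induced (non-geodesic) path that also passes through a third leaf; this rules out $4$-element monophonic position sets drawn from the leaves. The parity asymmetry accounts for an unpaired leaf forcing one additional chord in the opposite-parity case, and the thresholds $n\geq \tfrac{5b-3a}{2}+4$ and $n\geq \tfrac{5b-3a+11}{2}$ exactly reflect a ``five vertices per paired leaf'' gadget (e.g.\ a $C_5$-based configuration).

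\textbf{Step 3 (verification).} Show $\gp(G)\geq b$ by taking the leaves as a gp-set, and show $\gp(G)\leq b$ by arguing that any gp-set of size $\geq b+1$ would need to include an internal backbone vertex, which places three collinear points on one of the geodesics through the chord-free portion of $G$. Show $\mono(G)\geq a$ by exhibiting a specific $a$-subset of leaves compatible with all chords, and show $\mono(G)\leq a$ by a case analysis over the chord-induced paths constructed in Step~2, arguing that every $(a+1)$-subset of leaves contains three leaves lying on one of those induced paths.

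\textbf{Step 4 (padding the order).} To obtain arbitrary $n$ above the stated threshold, iteratively append pendant vertices at an extreme vertex of the core, which is possible because the leaves (all extreme) simultaneously lie in an optimal mp-set and an optimal gp-set by construction. Lemma~\ref{intersecting mp and gp sets}, together with the preservation results of~\cite{ThomasChandranTuite}, guarantees that this operation leaves $\mono$ and $\gp$ unchanged while adding one vertex and one edge per step, yielding the claimed totals $n+\tfrac{b-a}{2}+1$ or $n+\tfrac{b-a+3}{2}$.

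\textbf{Main obstacle.} The delicate part is Step~2: each added chord must \emph{precisely} kill one extra monophonic position set without (i) shortening a geodesic between two leaves and thereby reducing $\gp$ below $b$, or (ii) spawning an unintended induced path that would drop $\mono$ strictly below $a$. The ``five vertices per chord'' budget built into the thresholds on $n$ is exactly what is needed so that each gadget can be realised by a $C_5$-type subgraph (rather than a $C_4$), which is the shortest cycle that contributes a chord without creating short-circuit geodesics — verifying this optimality rigorously is where the bulk of the case analysis will lie.
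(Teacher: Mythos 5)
There is a genuine gap: your Step~2 never actually specifies the construction. You say each chord ``is placed so as to'' destroy certain monophonic position sets and you explicitly defer ``the bulk of the case analysis,'' so nothing is proved. More seriously, the tree-plus-chords template runs into a structural obstruction you do not address. A vertex of degree one can only ever be an \emph{endpoint} of a path, so any set of pendant vertices of $G$ is automatically in monophonic position. Hence, if your gp-set is the set of $b$ tree-leaves and you want $\mono(G)=a<b$, then at most $a$ of those $b$ vertices may remain pendant in $G$, so at least $b-a$ of them must absorb chord endpoints. With a budget of only $\frac{b-a}{2}+2$ chords this forces essentially every chord to join two distinct designated leaves, and you would then have to verify simultaneously that (i) no geodesic of the resulting graph contains three of the $b$ chosen vertices, (ii) every $(a+1)$-subset of them has three members on a common induced path, whose middle vertex is necessarily one of the chord-touched leaves, and (iii) no unintended induced path pushes $\mono$ strictly below $a$. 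None of this is carried out, and it is far from clear it can be within that edge budget.

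The paper sidesteps the obstruction by not using a tree base at all: it takes a cycle $C_{5r+1}$, joins the hub vertex $0$ to the vertices $3+5s$ for $0\le s\le r-1$, and attaches $t=a-2$ pendants to $0$. The optimal gp-set is $\{2+5s,4+5s: 0\le s\le r-1\}\cup W$; its cycle members have degree two and therefore can lie in the \emph{interior} of long induced paths around the cycle, which is exactly what holds the mp-number down at $t+2=a$, while the five-vertex spacing of the segments keeps all $2r+t=b$ chosen vertices in general position. Your ``five vertices per paired leaf'' numerology is pointing at the right count, but it is the degree-two cycle vertices, not leaves, that make it work; I would abandon the tree base. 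Finally, your Step~4 appeal to Lemma~\ref{intersecting mp and gp sets} fails when $a=2$, since then $W=\emptyset$ and there is no extreme vertex in the intersection of optimal mp- and gp-sets; the paper instead pads the order by lengthening one segment of the cycle in that case.
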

	\begin{proof}
		For $r \geq 2$ and $t \geq 0$ we define a graph $S(r,t)$ as follows. Take a cycle $C_{5r+1}$ of length $5r+1$ and identify its vertex set with $\mathbb{Z}_{5r+1}$ in the natural way. Join the vertex $0$ to the vertices $3+5s$ for all $s \in \mathbb{N}$ in the range $0 \leq s \leq r-1$. Finally append a set $W = \{ w_1,\dots , w_t\} $ of $t$ pendant vertices to the vertex $0$. An example is shown in Figure ~\ref{fig:smallsize}. We claim that $S(r,t)$ has $\mono(S(r,t)) = t+2$ and $\gp(S(r,t)) = 2r+t$.

		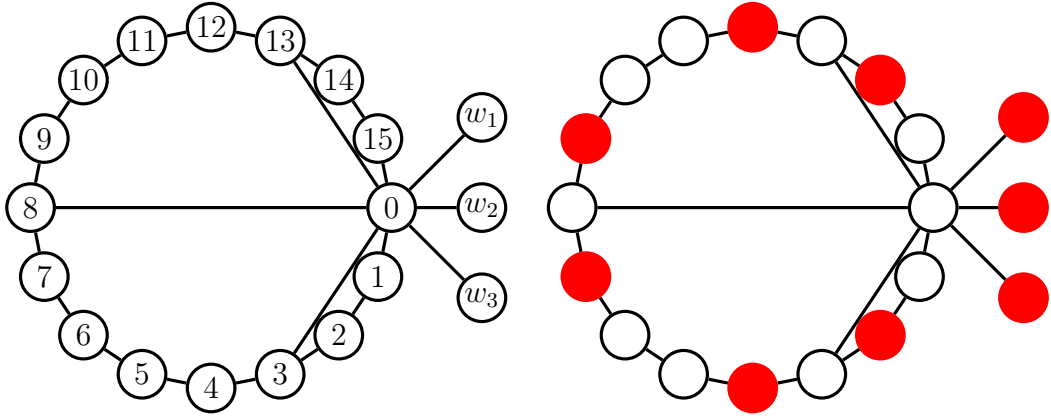
\begin{figure}
			\centering
			\begin{tikzpicture}[x=0.4mm,y=-0.4mm,inner sep=0.2mm,scale=0.6,very thick,vertex/.style={circle,draw,minimum size=18,fill=white}]
				\node at (150,-50) [vertex] (w1) {$w_1$};
				\node at (150,0) [vertex] (w2) {$w_2$};
				\node at (150,50) [vertex] (w3) {$w_3$};
				\node at (100,0) [vertex] (0) {0};
				\node at (92.39,38.27) [vertex] (1) {1};
				\node at (70.71,70.71) [vertex] (2) {2};
				\node at (38.27,92.39) [vertex] (3) {3};
				\node at (0,100) [vertex] (4) {4};
				\node at (-38.27,92.39) [vertex] (5) {5};
				\node at (-70.71,70.71) [vertex] (6) {6};
				\node at (-92.39,38.27) [vertex] (7) {7};
				\node at (-100,0) [vertex] (8) {8};
				\node at (-92.39,-38.27) [vertex] (9) {9};
				\node at (-70.71,-70.71) [vertex] (10) {10};
				\node at (-38.27,-92.39) [vertex] (11) {11};
				\node at (0,-100) [vertex] (12) {12};
				\node at (38.27,-92.39) [vertex] (13) {13};
				\node at (70.71,-70.71) [vertex] (14) {14};
				\node at (92.39,-38.27) [vertex] (15) {15};
				
				\node at (450,-50) [vertex,red] (v1) {};
				\node at (450,0) [vertex,red] (v2) {};
				\node at (450,50) [vertex,red] (v3) {};
				\node at (400,0) [vertex] (u0) {};
				\node at (392.39,38.27) [vertex] (u1) {};
				\node at (370.71,70.71) [vertex,red] (u2) {};
				\node at (338.27,92.39) [vertex] (u3) {};
				\node at (300,100) [vertex,red] (u4) {};
				\node at (261.73,92.39) [vertex] (u5) {};
				\node at (229.29,70.71) [vertex] (u6) {};
				\node at (207.61,38.27) [vertex,red] (u7) {};
				\node at (200,0) [vertex] (u8) {};
				\node at (207.61,-38.27) [vertex,red] (u9) {};
				\node at (229.29,-70.71) [vertex] (u10) {};
				\node at (261.73,-92.39) [vertex] (u11) {};
				\node at (300,-100) [vertex,red] (u12) {};
				\node at (338.27,-92.39) [vertex] (u13) {};
				\node at (370.71,-70.71) [vertex,red] (u14) {};
				\node at (392.39,-38.27) [vertex] (u15) {};
				
				\path
				(0) edge (1)
				(1) edge (2)
				(2) edge (3)
				(3) edge (4)
				(4) edge (5)
				(5) edge (6)
				(6) edge (7)
				(7) edge (8)
				(8) edge (9)
				(9) edge (10)
				(10) edge (11)
				(11) edge (12)
				(12) edge (13)
				(13) edge (14)
				(14) edge (15)
				(15) edge (0)
				
				(0) edge (3)
				(0) edge (8)
				(0) edge (13)
				
				(w1) edge (0)
				(w2) edge (0)
				(w3) edge (0)
				
				(u0) edge (u1)
				(u1) edge (u2)
				(u2) edge (u3)
				(u3) edge (u4)
				(u4) edge (u5)
				(u5) edge (u6)
				(u6) edge (u7)
				(u7) edge (u8)
				(u8) edge (u9)
				(u9) edge (u10)
				(u10) edge (u11)
				(u11) edge (u12)
				(u12) edge (u13)
				(u13) edge (u14)
				(u14) edge (u15)
				(u15) edge (u0)
				
				(u0) edge (u3)
				(u0) edge (u8)
				(u0) edge (u13)
				
				(v1) edge (u0)
				(v2) edge (u0)
				(v3) edge (u0)

				;
			\end{tikzpicture}
			\caption{$S(3,3)$ (left) with an optimal gp-set in red (right)}
			\label{fig:smallsize}
		\end{figure}

		The set $W \cup \{ 1,-1\} $ is obviously in monophonic position, so $\mono(S(r,t)) \geq t+2$. Let $M$ be any optimal mp-set of $S(r,t)$. By a result of~\cite{ThomasChandranTuite} on triangle-free graphs any set in monophonic position in $S(r,t)$ is an independent set. The path $1,2,3,\dots ,5r-1,5r$ in $C_{5r+1}$ is monophonic in $S(r,t)$ and hence contains at most two points of $M$, so if the mp-number of $S(r,t)$ is any greater than $t+2$, then $M$ contains three vertices of $C_{5r+1}$, one of which is $0$, so that $M \cap W = \emptyset $ and $t = 0$. A simple argument shows that the mp-number of $S(r,0)$ is two. Thus $\mono(S(r,t)) = t+2$.
		
		Consider now the set $\{2+5s,4+5s: 0 \leq s \leq r-1\} \cup W$. The vertices of this set are at distance at most four from each other and it is easily verified that none of the geodesics between them pass through other vertices of the set. Thus $\gp(S(r,t)) \geq 2r+t$. Let $K$ be a gp-set of $S(r,t)$ that contains $\geq 2r+t+1$ vertices. For $0 \leq s \leq r-1$ the set $S[s] = \{ 1+5s,2+5s,3+5s,4+5s,5+5s\} $ on $C_{5r+1}$ contains at most two vertices of $K$. It follows that $K$ must contain the vertex $0$, two vertices in each of the aforementioned sets and every vertex of $W$. As the vertices of $W$ have shortest paths to the vertices of $K$ in $S[0]$, we must have $t = 0$. For $r \geq 3$, if $0 \leq s < s' \leq r-1$ and $s+2 \leq s'$, then vertices in $S[s]$ have shortest paths to the vertices in $S[s']$ passing through $0$, so $0 \not \in K$ and $\gp(S(r,t)) = 2r+t$. Also $\gp(S(2,0)) = 4$.  If $a$ and $b$ have the same parity and $b > a$ the graph $S(\frac{b-a}{2}+1,a-2)$ therefore has the required parameters. By Corollary~\ref{intersecting mp and gp sets} if $a \geq 3$ we can add a path to a vertex of $W$ to give a graph with any larger order $n' \geq n$ and the same mp- and gp-numbers. If $a = 2$ then lengthening one of the sections of length five on $C_{5r+1}$ accomplishes the same aim. If $a$ and $b$ have opposite parities, then shortening one of the sections of length five on $C_{5r+1}$ in the above constructions by one vertex yields the required graph.             \end{proof}

	\section{The diameters of graphs with given order and mp-number}\label{section:diameter}
	
	In~\cite{Ost} Ostrand proved the well-known realisation result that for any two positive integers $a,b$ with $a \leq b \leq 2a$ there exists a connected graph with radius $a$ and diameter $b$. Similar realisation results for the diameters of graphs with given position or hull numbers are given in~\cite{CharHarZha,ChaZha}. This raises the following question: what are the possible diameters of a graph with given order and monophonic position number? We now solve this problem, beginning with mp-numbers $a \geq 3$.   
	\begin{theorem} 
		For any integers $a$ and $n$ with $3 \leq a \leq n-1$, there exists a connected graph $G$ with order $n$, mp-number $a$ and diameter $D$ if and only if $2 \leq D \leq n-a+1$. 
	\end{theorem}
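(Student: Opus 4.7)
I will split the proof into necessity and sufficiency. For necessity, since $a \leq n-1$ the graph $G$ cannot be $K_n$, so $D \geq 2$; for the upper bound, fix a monophonic position set $M$ of size $a$ and a diametral geodesic $P$ of length $D$. Because every geodesic is monophonic, $|V(P) \cap M| \leq 2$, so $P$ contributes at least $D-1$ vertices outside $M$, giving $n \geq a + D - 1$ and hence $D \leq n - a + 1$.

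\textbf{Sufficiency.} For each admissible $D$ I exhibit a construction in one of three regimes. For $D = n - a + 1$, take $K_a$ with a pendant path of length $D - 1$ attached at one clique vertex; the $a-1$ remaining simplicial clique vertices together with the pendant leaf form an mp-set of size $a$. For $D = 2$ and $n \geq a+2$, use the chalice $C(a-1,\,0,\,n-a-1)$ of Lemma~\ref{chalice}, whose order is $n$, whose mp-number is $a$, and whose diameter is $2$ thanks to the universal vertex from the join; the edge case $n = a+1$ is handled by $K_a$ with a single additional pendant adjacent to one clique vertex. For the intermediate range $3 \leq D \leq n-a$, begin with the chalice $C(a-1,\,0,\,n-a-D+1)$, which has order $n-D+2$, mp-number $a$, and diameter $2$, and then attach a pendant path of length $D-2$ at a leaf $\ell$ of the underlying starlike tree. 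At each step of building the tail, the newly added pendant is attached to the current leaf of the tail, which is an extreme vertex, so the pendant-preservation statement from~\cite{ThomasChandranTuite} that underlies Lemma~\ref{intersecting mp and gp sets} applies and the mp-number stays at $a$ throughout.

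\textbf{Why the diameter is exact, and the main obstacle.} The universal join vertex of the chalice forces every pair of chalice vertices to lie at distance at most $2$; combined with the new pendant tail of length $D-2$, this bounds all distances from the tail endpoint by $(D-2)+2 = D$, and the bound is attained by the tail endpoint together with any chalice vertex distinct from the universal vertex. No pair is pushed beyond $D$, so the diameter equals $D$. The main difficulty lies in the intermediate construction, where the interplay between the chalice's universal vertex and the new tail must be controlled so that neither the diameter exceeds $D$ nor the mp-number exceeds $a$; both controls follow from the rigid structure imposed by the universal vertex and from the iterated application of pendant-preservation at extreme vertices.
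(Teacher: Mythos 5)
Your necessity argument is the same in substance as the paper's: the paper phrases it via the bound $\mono(G) \le n-L+1$ for $L$ the length of a longest induced path, but counting the at least $D-1$ vertices of a diametral geodesic that lie outside an mp-set of size $a$ is the identical computation, and the observation that $D\ge 2$ because $G\ne K_n$ is shared. Your sufficiency constructions, however, are genuinely different. The paper uses a caterpillar for $D=n-a+1$, an ad hoc graph $F(n,a,n-a)$ (a path whose first two vertices are joined to $a-1$ extra vertices) for $D=n-a$, and ``flagellum'' graphs (a wheel, a path, and $a-2$ pendants) for $2\le D\le n-a-1$, each needing a bespoke verification of the mp-number. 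You instead reuse the chalice $C(a-1,0,t)$ of Lemma~\ref{chalice} as a diameter-two ``head'' for every $D\le n-a$ and grow a pendant tail by repeatedly attaching a leaf to an extreme vertex, so that the mp-number is controlled entirely by the already-proved join formula and the pendant-preservation fact from~\cite{ThomasChandranTuite}; this is shorter and arguably cleaner, and your parameter checks ($t\ge 1$, $r+s\ge 2$, the orders summing to $n$) all go through. Two small points, neither a real gap: (i) for $D=n-a+1$ you exhibit an mp-set of size $a$ in $K_a$ with a pendant path but never prove the matching upper bound $\mono\le a$ — this follows at once by iterating the same pendant-preservation lemma starting from $K_a$, so you should say so; (ii) the diameter $D$ in the intermediate case is attained by the tail endpoint together with a chalice vertex at distance two from $\ell$ (e.g.\ another leaf of the spider), not by ``any chalice vertex distinct from the universal vertex'', since the tree-neighbour of $\ell$ is at distance only $D-1$ from the tail endpoint.
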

	\begin{proof}
		For $a \geq 3$ the only connected graph with monophonic position number $a = n$ is the complete graph $K_n$ with diameter one. It was shown in~\cite{ThomasChandranTuite} that the mp-number of a graph with order $n$ and longest monophonic path with length $L$ is bounded above by $n-L+1$; rearranging, it follows that $L \leq n-a+1$. As any geodesic in $G$ is induced, it follows that $n-a+1$ is the largest possible diameter of a graph $G$ with order $n$ and $\mono(G) = a$. It remains only to show existence of the required graphs for the remaining values of the parameters. For $a = n-1$ and $D = 2$ this follows easily by considering the star graph $K_{1,n-1}$, so we can assume that $a \leq n-2$. For $n \geq 2$, Theorem~\ref{trees} shows that any caterpillar graph formed by adding $a-2$ leaves to the internal vertices of a path of length $n-a+1$ has order $n$, mp-number $a$ and diameter $D = n-a+1$.   
		
		For $a \geq 3$ we can construct a graph $F(n,a,n-a)$ with order $n$, mp-number $a$ and diameter $D = n-a$ as follows. Take a path $P$ of length $n-a$; let $V(P) = \{ u_0, u_1, \dots , u_{n-a}\} $, where $u_i \sim u_{i+1}$ for $0 \leq i \leq n-a-1$. Introduce a set $Q$ of $a-1$ new vertices $v_1,v_2, \dots, v_{a-1}$ and join each of them to $u_0$ and $u_1$. An example is shown in Figure~\ref{fig:n-k}. 
		
		We claim that the set $Q \cup \{ u_{n-a} \} $ is an mp-set. Any monophonic path $P'$ from $u_{n-a}$ to a vertex of $Q$ must pass through $u_1$, which is adjacent to every member of $Q$, and so $P'$ cannot terminate in another vertex of $Q$.  Similarly any monophonic path between two vertices of $Q$ passes through either $u_0$ or $u_1$ and hence cannot include any other vertex of $Q$ or $u_{n-a}$. Thus $\mono(F(n,a,n-a)) \geq a$.  
		
		Conversely, if $M$ is any mp-set in $F(n,a,n-a)$ with size $\geq a+1$, then $M$ contains at most two vertices from $P$ and so has size exactly $a+1$ and consists of $Q$ together with two vertices $x, y$ of $P$.  If $x = u_i, y = u_j$, where $1 \leq i < j \leq n-a$, then there is an induced path from $y$ to $Q$ through $x$, which is impossible. Hence we can take $x = u_0$. However $v_1 \sim u_0 \sim v_2$ would be a monophonic path in $M$, so we conclude that $\mono(F(n,a,n-a)) = a$.

		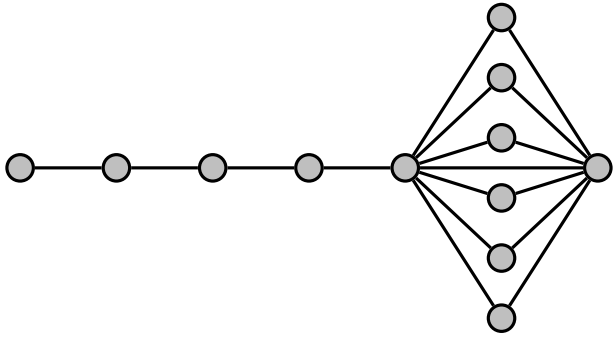
\begin{figure}
			\centering
			\begin{tikzpicture}[x=0.4mm,y=-0.4mm,inner sep=0.2mm,scale=0.4,very thick,vertex/.style={circle,draw,minimum size=10,fill=lightgray}]

				\node at (0,0) [vertex] (x1) {};
				\node at (-160,0) [vertex] (x2) {};
				\node at (-240,0) [vertex] (x3) {};
				\node at (-320,0) [vertex] (x4) {};
				\node at (-400,0) [vertex] (x5) {};
				\node at (-480,0) [vertex] (x6) {};
				\node at (-80,-125) [vertex] (u1) {};  
				\node at (-80,-75) [vertex] (u2) {}; 
				\node at (-80,-25) [vertex] (u3) {};  
				\node at (-80,25) [vertex] (u4) {};  
				\node at (-80,75) [vertex] (u5) {}; 
				\node at (-80,125) [vertex] (u6) {};      
				\path

				(x1) edge (x2)
				(x2) edge (x3)
				(x3) edge (x4)
				(x4) edge (x5)
				(x5) edge (x6)
				
				(x1) edge (u1)
				(x1) edge (u2)
				(x1) edge (u3)
				(x1) edge (u4)
				(x1) edge (u5)
				(x1) edge (u6)
				
				(x2) edge (u1)
				(x2) edge (u2)
				(x2) edge (u3)
				(x2) edge (u4)
				(x2) edge (u5)
				(x2) edge (u6)	
				
				;
			\end{tikzpicture}
			\caption{A graph $F(12,7,5)$ with order $n = 12$, mp-number $7$ and diameter $D = 5$}
			\label{fig:n-k}
		\end{figure}

		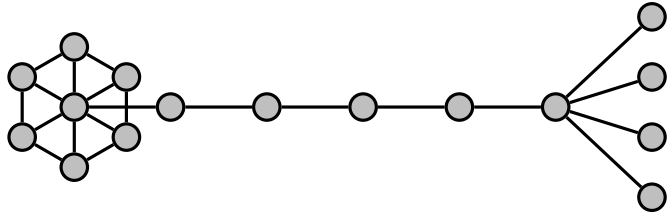
\begin{figure}
			\centering
			\begin{tikzpicture}[x=0.4mm,y=-0.4mm,inner sep=0.2mm,scale=0.4,very thick,vertex/.style={circle,draw,minimum size=10,fill=lightgray}]
				\node at (43.3,25) [vertex] (v1) {};
				\node at (0,50) [vertex] (v2) {};
				\node at (-43.3,25) [vertex] (v3) {};
				\node at (-43.3,-25) [vertex] (v4) {};
				\node at (0,-50) [vertex] (v5) {};
				\node at (43.3,-25) [vertex] (v6) {};
				\node at (0,0) [vertex] (x0) {}; 
				\node at (80,0) [vertex] (x1) {};
				\node at (160,0) [vertex] (x2) {};
				\node at (240,0) [vertex] (x3) {};
				\node at (320,0) [vertex] (x4) {};
				\node at (400,0) [vertex] (x5) {};  
				\node at (480,-75) [vertex] (u1) {}; 
				\node at (480,-25) [vertex] (u2) {};  
				\node at (480,25) [vertex] (u3) {};  
				\node at (480,75) [vertex] (u4) {};       
				\path
				(v1) edge (v2)
				(v2) edge (v3)		
				(v3) edge (v4)		
				(v4) edge (v5)
				(v5) edge (v6)
				(v6) edge (v1)	
				(x0) edge (v1)
				(x0) edge (v2)
				(x0) edge (v3)
				(x0) edge (v4)
				(x0) edge (v5)
				(x0) edge (v6)
				(x0) edge (x1)
				(x1) edge (x2)
				(x2) edge (x3)
				(x3) edge (x4)
				(x4) edge (x5)
				(x5) edge (u1)
				(x5) edge (u2)
				(x5) edge (u3)
				(x5) edge (u4)
				
				;
			\end{tikzpicture}
			\caption{$F(16,6,7)$}
			\label{fig:flagellum}
		\end{figure}

		Finally for $2 \leq D \leq n-a-1$ we define the \emph{flagellum graph} $F(n,a,D)$ as follows. Take a path $P$ of length $D-2$ with vertices $\{ x_0,x_1,\dots , x_{D-2} \} $, where $x_i \sim x_{i+1}$ for $0 \leq i \leq D-3$. Let $C_s$ be a cycle with length $s = n-D-a+3$ and vertex set $\{ u_0,u_1,\dots , u_{s-1} \}$, where $u_i \sim u_{i+1}$ for $0 \leq i \leq s-1$ and addition is carried out modulo $s$. Join $x_0$ to every vertex of $C_s$, so that $C_s \cup \{ x_0\} $ induces a wheel.  Finally append a set $Q = \{ v_1,v_2,\dots, v_{a-2}\} $ of $a-2$ pendant edges to $x_{D-2}$; an example is shown in Figure~\ref{fig:flagellum}. This graph has order $n$ and diameter $D$; we now show that $\mono(F(n,a,D)) = a$.  
		
		It is simple to verify that the set $Q \cup \{ u_0,u_1\} $ is an mp-set and so $\mono(F(n,a,D)) \geq a$. Suppose that $M$ is an mp-set with size $\geq a+1$. $M$ contains at most two points of $P$. If $M$ contains two points of $P$ then $M \cap (C_s\cup Q) = \emptyset $ and $|M| = 2 < a$.  Suppose that $M$ contains a point of $P$.  Then $M$ cannot contain points of both $C_s$ and $Q$ or else the point of $P$ contained in $M$ would lie on a monophonic path between any point of $M$ in $C_s$ and any point of $M$ in $Q$. Any mp-set contains at most two points of $C_s$, so if $M$ contains a point of $C_s$, it follows that $a+1 \leq |M| \leq 3 \leq a$, which is impossible. Therefore $M$ must contain a point of $Q$; but as $|Q| = a-2$  we have $|M| \leq 1+(a-2) = a-1$. Thus $M \cap P = \emptyset $. Therefore $M$ consists of at most two points of $C_s$ and at most $a-2$ points of $Q$, so that $|M| \leq 2+(a-2) = a$.    
	\end{proof}
	We now determine the possible diameters of graphs with mp-number two.
	
	\begin{theorem}
		There exists a graph with order $n$, monophonic position number $a = 2$ and diameter $D \geq 3$ if and only if $D = n-1$ or $3 \leq D \leq \lfloor \frac{n}{2} \rfloor $. 
	\end{theorem}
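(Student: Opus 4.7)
The proof is of the if-and-only-if form, so the argument has two halves.

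For the \emph{if} direction, I must build, for each admissible pair $(n,D)$, a connected graph of order $n$, diameter $D$ and monophonic position number $2$. The two boundary cases yield to standard examples: $D = n-1$ is achieved by the path $P_n$, which has $\mono(P_n) = \ell(P_n) = 2$ by Theorem~\ref{trees}, and $D = \lfloor n/2 \rfloor$ is achieved by the cycle $C_n$, which is triangle-free and, since every three of its vertices lie on a proper arc (an induced path), has $\mono(C_n) = 2$. For the intermediate range $3 \leq D < \lfloor n/2 \rfloor$ (which forces $n \geq 2D+1$), one starts from a cycle $C_m$ with $m \in \{2D, 2D+1\}$ and attaches the remaining $n-m$ vertices as hub vertices in the style of the half-wheel construction $H(n,b)$ of Corollary~\ref{maincorollary}, each hub having a carefully coordinated, evenly spaced neighbourhood on the cycle, so that the diameter stays at $D$, no triangle is created, and every triple of vertices still lies on a common induced path.

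For the \emph{only if} direction, let $G$ be connected with $\mono(G) = 2$, order $n$, and diameter $D \geq 3$. If $G$ is a tree then $\ell(G) = 2$ by Theorem~\ref{trees}, giving $G = P_n$ and $D = n-1$; otherwise $G$ contains a cycle and triangle-freeness is forced, since any triangle would supply three mutually adjacent vertices through which no induced path can pass, contradicting $\mono(G) = 2$. Fix a diametral geodesic $P = v_0 v_1 \cdots v_D$. If $V(G) = V(P)$ we are again in the path case; otherwise choose $w \in V(G) \setminus V(P)$. The mp-condition applied to $\{v_0, v_D, w\}$ produces an induced path $Q$ through all three, and a short case analysis on which two of these three vertices are the endpoints of $Q$ yields either (i) an alternative $v_0$-$v_D$ geodesic $Q$ of length exactly $D$ passing through $w$, or (ii) an induced path of length at least $D + 1$ supplying enough extra off-$P$ vertices to give $n \geq 2D$ directly.

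In case (i), if $P$ and $Q$ are internally disjoint then $|V(P) \cup V(Q)| = 2D$ and $n \geq 2D$ as required; otherwise they share some internal vertex, and descending to the first divergence produces two distinct neighbours $a \in V(P)$ and $b \in V(Q)$ of a common vertex, with $a \not\sim b$ by triangle-freeness. To compensate for each shared internal vertex of $P$ and $Q$, the plan is to apply the mp-condition to appropriately chosen additional triples (for example $\{v_0, v_j, w\}$ with $v_j$ an unshared interior vertex of $P$) and argue that the resulting induced paths cannot live inside $V(P) \cup V(Q)$ alone---any such in-path would yield either a chord of $P$ or $Q$ or a triangle---and must therefore use vertices of $G$ outside $V(P) \cup V(Q)$. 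A careful bookkeeping of these forced external vertices then recovers $n \geq 2D$. The main technical obstacle is precisely this final bookkeeping step: verifying that the various supplementary mp-triples produce \emph{distinct} external vertices, so that each shared interior vertex of $P$ and $Q$ is genuinely compensated, which requires a careful analysis of how chords, triangle-freeness and the geodesic structure of $P$ and $Q$ interact.
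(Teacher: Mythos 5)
Your proposal has genuine gaps in both directions, and in both places the paper takes a cleaner route than the one you sketch.

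For the existence direction, the boundary cases ($P_n$ for $D = n-1$, $C_n$ for $D = \lfloor n/2\rfloor$) are fine, but the intermediate range $3 \leq D < \lfloor n/2 \rfloor$ is where all the work lies, and there your construction is only a sketch: ``attach the remaining $n-m$ vertices as hub vertices \dots\ each hub having a carefully coordinated, evenly spaced neighbourhood'' is not a construction until you specify the neighbourhoods and verify that every triple containing two or three hubs lies on a common induced path. With several hubs this verification is genuinely delicate (for instance, three pairwise non-adjacent hubs sharing cycle neighbours need not lie on a common induced path), and it is not ``in the style of'' $H(n,b)$, which has a single hub. The paper avoids multiple hubs entirely: it keeps one hub and varies the number of spokes $r$ in the half-wheel $H(n,r)$, which sweeps the diameter through the whole range $4 \leq D \leq \lceil n/2\rceil - 1$, and then uses $T^*_{n,2}$ for $D = 3$ and $C_n$ for even $n$ at $D = \lfloor n/2 \rfloor$.

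For the non-existence direction ($\lfloor n/2\rfloor < D < n-1$), your plan of fixing a diametral geodesic $P$, extracting an induced path $Q$ through $\{v_0, v_D, w\}$, and compensating for shared internal vertices by further mp-triples breaks down exactly where you say it does: there is no guarantee that the ``forced external vertices'' produced by the supplementary triples are distinct from one another or from $w$. Moreover your case (ii) is already wrong as stated: an induced $v_0$--$v_D$ path of length $D+1$ can share all but one or two of its vertices with $P$, so it does not ``directly'' give $n \geq 2D$. The paper's argument is structural and short: if $G$ is $2$-connected, two internally disjoint paths of length $\geq D$ between a diametral pair force $n \geq 2D > n$, a contradiction; otherwise $G$ has a cut-vertex $w$, and any cut-vertex of degree $\geq 3$ has three neighbours, not all in one component of $G-w$, forming an mp-set of size $3$, so every cut-vertex has degree $2$, whence $G$ is a path and $D = n-1$. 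You should replace your bookkeeping argument with a connectivity-based one of this kind.
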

	\begin{proof}
		It follows from~\cite{ThomasChandranTuite} that for $n \geq 2r+1$ and $r \geq 3$ we have $\mono(H(n,r)) = 2$, where $H(n,r)$ is the half-wheel graph defined in Section~\ref{smallest graphs}. Moreover $H(n,r)$ has order $n$ and diameter $D = 3+\lceil \frac{n}{2} \rceil -r$ if $r \geq 4$ and diameter $\lceil \frac{n}{2}\rceil -1$ if $r = 3$. Varying $r$ from $3$ to $\lfloor \frac{n-1}{2} \rfloor $ we obtain graphs with order $n$, mp-number $2$ and all diameters in the range $4 \leq D \leq \lceil \frac{n}{2} \rceil -1$. For odd $n$ we have $\lceil \frac{n}{2} \rceil -1 = \lfloor \frac{n}{2} \rfloor $. For even $n \geq 4$ the cycle $C_n$ has mp-number $2$ and diameter $\lfloor \frac{n}{2} \rfloor $. 
		
		Theorem~\ref{turantheorem} shows that for $n \geq 3$  the graph $T^*_{n,2}$, which is a complete bipartite graph $K_{\lceil \frac{n}{2} \rceil , \lfloor \frac{n}{2} \rfloor }$ minus a matching of size $\lfloor \frac{n}{2} \rfloor $, has mp-number $2$; as this graph has diameter $D = 3$, we have proven existence for all $D$ in the range $3 \leq D \leq \lfloor \frac{n}{2} \rfloor $. For $n \geq 2$ the path with length $n-1$ is a graph with order $n$, mp-number $2$ and diameter $n-1$. This proves the existence of graphs with mp-number $2$ and all values of the diameter $D$ claimed in the statement of the theorem.
		
		We now show that there is no graph with order $n$, mp-number $2$ and diameter $D$, where $\lfloor \frac{n}{2} \rfloor < D < n-1$. Suppose that $G$ is such a graph and let $u$ and $v$ be vertices at distance $D$. Assume that $G$ is 2-connected. Then $u$ and $v$ are joined by internally disjoint paths of length $\geq D$, so that $n \geq 2D > n$, which is impossible. 
		
		Hence $G$ contains a cut-vertex $w$. Suppose that $d(w) \geq 3$. Then choose a set $M$ of three neighbours of $w$ such that the vertices of $M$ are not all contained in the same component of $G-w$; it is easily seen that $M$ is an mp-set. Hence we must have $d(w) = 2$. Each neighbour of $w$ is either a leaf of $G$ or a cut-vertex, so repeating this reasoning shows that $G$ is a path, which contradicts $D < n-1$.    
	\end{proof}

	\begin{lemma}\label{diameter two construction}
		If there exists a graph with order $r \geq 4$, monophonic position number $a = 2$ and diameter $D = 2$, then there exist graphs with monophonic position number $a = 2$, diameter $D = 2$ and orders $3r$, $3r+1$ and $3r+2$.
	\end{lemma}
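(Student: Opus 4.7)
Given $G$ of order $r \geq 4$ with $\mono(G) = 2$ and diameter $2$, my plan is to construct, for each $n \in \{3r, 3r+1, 3r+2\}$, a graph $H_n$ of order $n$ with $\mono(H_n) = 2$ and diameter $2$. For $H_{3r}$ I would take three vertex-disjoint copies $V_1, V_2, V_3$ of $V(G)$, with corresponding vertices $v^{(i)}$, and induce $G$ on each copy; I would then add a carefully selected set of inter-copy edges. For $H_{3r+1}$ and $H_{3r+2}$ I would append one or two auxiliary vertices to $H_{3r}$ with appropriate neighbourhoods.

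Once the construction is specified, the verification of diameter $2$ is routine: pairs of vertices in the same copy are handled by the diameter-$2$ property of $G$, while pairs in different copies are handled by the inter-copy edges, which must be arranged to provide either direct adjacency or a common neighbour.

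The verification of $\mono(H_n) = 2$ is the main step. I would show that every triple of vertices of $H_n$ lies on a common induced path via a case analysis on the distribution of the triple among the copies: (i) all three vertices in one copy, handled by the mp-$2$ property of $G$, which provides an induced path of $G$ containing the triple; (ii) two in one copy and one in another, by extending an induced path of $G$ through a suitable inter-copy edge; (iii) one vertex in each copy, where an induced path traversing all three copies has to be constructed from the inter-copy edges. For $H_{3r+1}$ and $H_{3r+2}$, a parallel argument must show that each auxiliary vertex participates in an induced path with every pair of existing vertices of $H_{3r}$.

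The main obstacle is case (iii). The inter-copy edge set must be sparse enough to avoid the chords that would destroy the monophonic character of the desired path, yet dense enough that such a path exists in the first place; in particular, triangles and common-neighbourhood configurations that create new three-element mp-sets must be ruled out. The hypotheses on $G$ (diameter $2$ and mp-number $2$) provide exactly the structural flexibility needed to balance these competing requirements, since any three vertices of $G$ lie on an induced path and any two non-adjacent vertices share a common neighbour, allowing a judicious choice of representatives in each copy to assemble the required inter-copy path.
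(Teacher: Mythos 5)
There is a genuine gap: the construction itself is never specified, and for this lemma the construction \emph{is} the proof. You correctly identify the two proof obligations (diameter $2$, and every triple of vertices lying on a common induced path, which is what $\mono = 2$ amounts to), but the "carefully selected set of inter-copy edges" is left as a black box, and your closing paragraph merely asserts that the hypotheses on $G$ provide "exactly the structural flexibility needed" to choose it. That assertion is the whole difficulty. In your three-copy scheme the two requirements pull hard against each other: to get diameter $2$ between copies $V_1$ and $V_3$ you need every such cross pair to be adjacent or to share a common neighbour, which forces a dense inter-copy edge set; but a dense edge set riddles the would-be induced paths of case (iii) with chords and creates small dominated configurations that are themselves mp-sets of size $3$. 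Nothing in the proposal shows these constraints can be met simultaneously, and it is not clear that they can.

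For contrast, the paper does not use three copies of the given graph $H$ at all. It keeps a single copy of $H$ on vertices $h_1,\dots,h_r$, attaches the explicit gadget $K_{r,r}$ minus a perfect matching on new sets $X=\{x_1,\dots,x_r\}$ and $Y=\{y_1,\dots,y_r\}$ (this is the graph $T^*_{2r,2}$, already known to have mp-number $2$), joins $x_i$ and $y_i$ to $h_i$, and adds two adjacent apex-like vertices $z_1,z_2$ dominating $X$ and $Y$ respectively; deleting $z_2$ or both $z_i$ gives the orders $3r+1$ and $3r$. Diameter $2$ is then immediate, and the mp-number is verified by an explicit case analysis using a handful of named induced cycles and paths, exploiting the $x_i\leftrightarrow y_i$, $z_1\leftrightarrow z_2$ automorphism to cut down the cases. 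If you want to salvage your approach you would need to write down a concrete inter-copy edge set and carry out the full case (ii)/(iii) analysis; as it stands the proposal is a strategy, not a proof.
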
	
	\begin{proof}
		Let $H$ be a graph with order $r \geq 4$, monophonic position number $2$ and diameter $D = 2$. Label the vertices of $H$ as $h_1,h_2,\dots ,h_r$. We will construct new graphs with orders $3r$, $3r+1$ and $3r+2$ with mp-number $2$ and diameter $D = 2$ from $H$ as follows. 
		
		First we define the graph $G(H)$ with order $3r+2$. Let $X = \{ x_1,x_2,\dots ,x_r\} $ and $Y = \{ y_1,y_2,\dots,y_r\} $ be two new sets of vertices disjoint from $V(H)$. On $X \cup Y$ draw a complete bipartite graph with partite sets $X$ and $Y$ and then delete the perfect matching $x_iy_i$, $1 \leq i \leq r$. For $1 \leq i \leq r$ join both $x_i$ and $y_i$ to the vertex $h_i$ by an edge. Finally add two new vertices $z_1$ and $z_2$, join $z_1$ to each vertex of $X$ by an edge, join $z_2$ to each vertex of $Y$ and lastly add the edge $z_1z_2$ between the two new vertices. An example of this construction for $H = C_4$ is displayed in Figure~\ref{fig:3r+2 construction}.
		
		It is easily seen that $G$ has diameter $D = 2$. To show that the mp-number of $G$ is $2$ it is sufficient to show that for any set $M$ of three vertices of $G$ there is an induced path containing each vertex of $M$.  It is evident that for any vertex $v \in V(G) - \{ z_1,z_2\} $ there is an induced path in $G$ containing $z_1,z_2$ and $v$, so we can assume that $|M \cap \{ z_1,z_2\} | \leq 1$.

		The map fixing every element of $H$, interchanging $x_i$ and $y_i$ for $1 \leq i \leq r$ and swapping $z_1$ and $z_2$ is an automorphism of $G$, which reduces the number of cases that we need to check. Suppose that $M$ is a set of three vertices of $G(H)$ containing one of $z_1,z_2$; say $z_1 \in M$. Without loss of generality we have the following nine possibilities for $M' = M-\{ z_1\} $: i) $M' = \{ x_1,x_2 \} $, ii) $M' = \{ x_1,h_1\} $, iii) $M' = \{  x_1,h_2\} $, iv) $M' = \{ x_1,y_1\} $, v) $M' = \{ x_1,y_2\} $, vi) $M' = \{ h_1,h_2\} $, vii) $M' = \{ h_1,y_1\} $, viii) $M' = \{ h_1,y_2\} $ and ix) $M' = \{ y_1,y_2\} $. 
		
		Consider the following two cycles. For $1 \leq i,j \leq r$, where $i \not = j$, we define $C(i,j)$ to be the cycle $z_1,x_i,y_j,h_j,x_j,z_1$ and, if $P$ is a shortest path in $H$ from $h_i$ to $h_j$, then $D(i,j)$ is the cycle formed from the path $P$ from $h_i$ to $h_j$, followed by the path $h_j,x_j,z_1,x_i,h_i$. Both of these cycles are induced and so can contain at most two points of $M$. By varying the parameters $i$ and $j$ we see that the first seven configurations for $M'$ above are not possible. 
		
		For viii) let $P$ be a shortest path in $H$ from $h_1$ to $h_2$. By assumption $r \geq 4$, so as $P$ has length at most two, there exists a vertex of $H$, say $h_3$, not appearing in $P$. Then the path $P$, followed by the path $h_2,y_2,x_3,z_1$ contains all three vertices of $M$.  Finally for case ix) the induced path $y_1,x_2,z_1,x_1,y_2$ contains all three vertices of $\{ z_1,y_1,y_2\} $. 
		
		We can now suppose that $M \cap \{ z_1,z_2\} = \emptyset $. Observe that the subgraph of $G$ induced by $X \cup Y$ is isomorphic to the graph $T^*_{2r,2}$ from the proof of Theorem~\ref{turantheorem}, so we can also assume that $M \not \subseteq X \cup Y$. Furthermore, as $\mono(H) = 2$, we can take $M \not \subseteq V(H)$. For all $1\leq i,j \leq r$ and $i \not = j$ there is an induced cycle $x_i,h_i,y_i,x_j,h_j,y_j,x_i$, so $M$ must contain vertices with at least three different subscripts $i \in \{ 1,\dots ,r\} $. Therefore without loss of generality we are left with the following three cases: i) $M = \{ x_1,x_2,h_3\} $, ii) $M = \{ x_1,y_2,h_3\} $ and iii) $M = \{ x_1,h_2,h_3\} $.
		
		For cases i) and ii), let $P'$ be the shortest path in $H$ from $h_3$ to $\{ h_1,h_2\} $; without loss of generality $P'$ is a $h_2,h_3$-path that does not pass through $h_1$. Then in case i) $x_1,z_1,x_2,h_2$ followed by $P'$ contains all three points of $M = \{ x_1,x_2,h_3\} $ and in case ii) the path $x_1,y_2,h_2$ followed by $P'$ contains all three vertices of $M = \{ x_1,h_3,y_2\} $. 
		
		For case iii), if $P'$ is a shortest $h_2,h_3$-path in $H$, then $x_1,y_2,h_2$ followed by $P'$ contains all three vertices of $M = \{ x_1,h_2,h_3 \} $ unless $d(h_2,h_3) = 2$ and $P'$ is the path $h_2,h_1,h_3$, in which case the path $h_3,y_3,x_1,y_2,h_2$ suffices.   
		
		This analysis also shows that the graphs $G'(H) = G(H)-\{ z_2\} $ with order $3r+1$ and the graph $G''(H) = G(H)-\{ z_1,z_2\} $ with order $3r$ also have mp-number $2$ and  diameter $D = 2$. Hence for any $r \geq 4$ if there is a graph with order $r$, mp-number $2$ and diameter $D = 2$ there also exists such a graph for orders $3r, 3r+1$ and $3r+2$. \end{proof}

	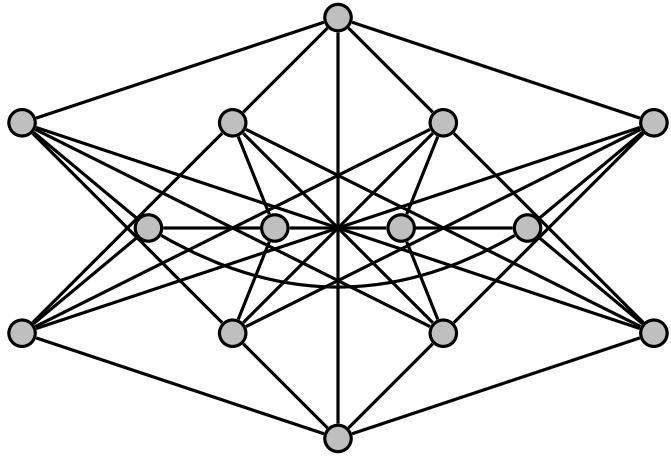
\begin{figure}
		\centering
		\begin{tikzpicture}[x=0.4mm,y=-0.4mm,inner sep=0.2mm,scale=0.7,very thick,vertex/.style={circle,draw,minimum size=10,fill=lightgray}]
			\node at (-150,-50) [vertex] (x1) {$$};
			\node at (-50,-50) [vertex] (x2) {$$};
			\node at (50,-50) [vertex] (x3) {$$};
			\node at (150,-50) [vertex] (x4) {$$};

			\node at (-90,0) [vertex] (h1) {$$};
			\node at (-30,0) [vertex] (h2) {$$};
			\node at (30,0) [vertex] (h3) {$$};
			\node at (90,0) [vertex] (h4) {$$};

			\node at (-150,50) [vertex] (y1) {$$};
			\node at (-50,50) [vertex] (y2) {$$};
			\node at (50,50) [vertex] (y3) {$$};
			\node at (150,50) [vertex] (y4) {$$};
			
			\node at (0,-100) [vertex] (z1) {$$};
			\node at (0,100) [vertex] (z2) {$$};
			
			\path
			(x1) edge (y2)
			(x1) edge (y3)
			(x1) edge (y4)
			
			(x2) edge (y1)
			(x2) edge (y3)
			(x2) edge (y4)
			
			(x3) edge (y1)
			(x3) edge (y2)
			(x3) edge (y4)
			
			(x4) edge (y1)
			(x4) edge (y2)
			(x4) edge (y3)
			
			(x1) edge (h1)
			(y1) edge (h1)
			(x2) edge (h2)
			(y2) edge (h2)
			(x3) edge (h3)
			(y3) edge (h3)
			(x4) edge (h4)
			(y4) edge (h4)
			
			(h1) edge (h2)
			(h2) edge (h3)
			(h3) edge (h4)
			(h4) edge [bend left] (h1)
			
			(z1) edge (x1)
			(z1) edge (x2)
			(z1) edge (x3)
			(z1) edge (x4)
			
			(z2) edge (y1)
			(z2) edge (y2)
			(z2) edge (y3)
			(z2) edge (y4)
			
			(z1) edge (z2)
			
			;
		\end{tikzpicture}
		\caption{The graph $G(C_4)$}
		\label{fig:3r+2 construction}
	\end{figure}

	\begin{theorem}
		There is a graph with order $n$, monophonic position number $a = 2$ and diameter $D = 2$ if and only if $n \in \{ 3,4,5,8\} $ or $n \geq 11$.
	\end{theorem}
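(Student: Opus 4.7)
The theorem asserts three things: existence for $n\in\{3,4,5,8\}$, existence for all $n\geq 11$, and non-existence for $n\in\{6,7,9,10\}$. For the small existence cases, $P_3$, $C_4$, and $C_5$ clearly have diameter $2$ and monophonic position number $2$, since in each graph the only induced paths are subpaths of the whole graph, so every triple of vertices lies on one. For $n=8$ I would exhibit a specific graph (for instance, a suitable blow-up or augmentation of $C_5$ with added apex edges to force diameter $2$) and verify both properties by direct inspection of all vertex triples.

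For $n\geq 11$ the strategy is induction using Lemma~\ref{diameter two construction}, which turns each valid base graph of order $r\geq 4$ into new graphs of orders $3r$, $3r+1$, and $3r+2$. Applied to $C_4$, $C_5$, and the order-$8$ base graph above, this yields orders $\{12,13,14,15,16,17,24,25,26\}$ directly. The remaining base orders $n\in\{11,18,19,20,21,22,23,27,28,29,30,31,32\}$ require ad-hoc constructions, which I would obtain as small modifications of the $G(H),G'(H),G''(H)$ construction of Lemma~\ref{diameter two construction} (varying the number of apex vertices $z_i$, or allowing slightly different bases $H$), or as variants of the half-wheel and circulant graphs already appearing earlier in the paper. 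Once every order in $[11,32]$ is covered, for any $n\geq 33$ I would write $n=3r+s$ with $s\in\{0,1,2\}$ and $r=\lfloor n/3\rfloor\geq 11$; the inductive hypothesis supplies a base graph of order $r$, and one further application of Lemma~\ref{diameter two construction} yields the required graph of order $n$.

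For the non-existence direction, I would first observe that any triangle in a graph is itself an mp-set (any induced path through three mutually adjacent vertices must contain a chord), so mp-number $2$ forces triangle-freeness. Consequently, for $n\in\{6,7,9,10\}$ it suffices to show that no triangle-free diameter-$2$ graph on $n$ vertices has mp-number $2$. The class of triangle-free diameter-$2$ graphs on at most ten vertices is small — essentially complete bipartite graphs $K_{p,q}$ and a few modifications, together with $C_5$ and the Petersen graph — and a short structural case analysis organised by maximum degree eliminates each candidate by exhibiting a triple of vertices whose pairwise common-neighbourhood structure forbids an induced path through all three. The delicate case is the Petersen graph at $n=10$; here one may consider three vertices such as $\{a_0,a_2,b_1\}$ in the standard labelling and argue that every candidate induced path through all three is forced to pass through their common neighbour $a_1$, which creates a chord.

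The principal obstacle is the bookkeeping in the $n\geq 11$ step: Lemma~\ref{diameter two construction} alone produces only orders of the form $3r,3r+1,3r+2$, so a nontrivial collection of ad-hoc base constructions is required to cover the scattered orders in $[11,32]$ before the induction takes over. Each individual case is routine to verify once the graph is drawn, but there are many of them. By contrast, the non-existence proofs for the four excluded orders are comparatively short, relying on triangle-freeness and the limited supply of small triangle-free diameter-$2$ graphs, with the Petersen graph being the only case demanding individual attention.
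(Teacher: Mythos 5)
Your reduction for $n \geq 33$ --- writing $n = 3r+s$ and invoking Lemma~\ref{diameter two construction} on a base graph of order $r \geq 11$ supplied by induction --- is exactly the paper's argument. The gap is everything below $33$. The paper does not prove any of the cases $n \leq 32$ by hand: it settles both the existence cases and the non-existence for $n \in \{6,7,9,10\}$ by an exhaustive computer search, and only then starts the induction. Your proposal replaces that search with a programme that is mostly left unexecuted: the order-$8$ graph is never actually exhibited, and thirteen of the orders in $[11,32]$ --- namely $11$, $18$ through $23$ and $27$ through $32$ --- are deferred to unspecified ad-hoc constructions. Since Lemma~\ref{diameter two construction} only produces orders $3r$, $3r+1$, $3r+2$ from bases of order at least $4$, these thirteen orders genuinely cannot be reached from the bases $\{4,5,8\}$, so without explicit graphs for each of them the induction has no foundation and the existence half of the theorem is unproved.

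The non-existence argument for $n \in \{6,7,9,10\}$ has a more substantive flaw. The reduction to triangle-free graphs is fine (a triangle is a clique, hence an mp-set), but the claim that the triangle-free diameter-two graphs on at most ten vertices are essentially the complete bipartite graphs, $C_5$ and the Petersen graph is false. Every blow-up $C_5[n_1,\dots,n_5]$ with all parts nonempty is triangle-free of diameter two, as are many proper spanning subgraphs of complete bipartite graphs and sporadic examples such as $C_5$ with an extra vertex joined to two non-adjacent cycle vertices; at $n = 10$ there are many candidates beyond the Petersen graph, and at $n=8$ the class must contain at least one graph of mp-number $2$ (since the theorem asserts existence there), which your enumeration omits entirely. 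A correct hand proof would have to list and eliminate all of these, which is precisely the work the paper delegates to computation. So while your global strategy mirrors the paper's, the portions you would need to supply in order to dispense with the computer search are either missing or rest on an incorrect enumeration.
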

	\begin{proof}
		The statement of the theorem has been verified by computer search for all $n \leq 32$~\cite{Erskine}. Let $n \geq 33$ and assume that the result is true for all orders $< n$. Write $n = 3r+s$, where $s$ is the remainder on division of $n$ by $3$. Then $r \geq 11$ and by the induction hypothesis there exists a graph with order $r$, mp-number $2$ and diameter $D = 2$. Then by Lemma~\ref{diameter two construction} there exists a graph with order $n$, mp-number $2$ and diameter $D = 2$. The theorem follows by induction.
	\end{proof}
	
	The computer search used in this proof suggests the following stronger result.
	\begin{conjecture}
		For any $n \geq 11$, there is a circulant graph with order $n$, monophonic position number $a = 2$ and diameter $D = 2$.
	\end{conjecture}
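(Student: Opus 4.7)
My strategy is to combine the author's exhaustive computer verification for $n \le 32$ with an explicit construction for larger $n$. The first natural attempt would be to adapt the tripling construction of Lemma~\ref{diameter two construction} to the circulant setting, but since the auxiliary construction $G(H)$ is not circulant-preserving, a genuinely new idea is needed. I would instead seek a parametric family $C_n(S_n)$ whose description depends on the residue of $n$ modulo some small integer.

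For any candidate $C_n(S_n)$ three conditions must be verified. First, triangle-freeness: since any triangle is automatically a three-element mp-set, $\mono(G)=2$ forces $G$ to be triangle-free, which for a circulant means $S_n$ is \emph{sum-free}, i.e.\ $(S_n+S_n)\cap S_n=\emptyset$. Second, diameter two: $S_n \cup (S_n+S_n) \supseteq \mathbb{Z}_n\setminus\{0\}$. Third, $\mono \le 2$: for every independent triple $\{u,v,w\}$ there must be an induced path through all three. By vertex-transitivity one may fix one element at $0$ and parametrise the other two as $a, b \in \mathbb{Z}_n \setminus (S_n \cup \{0\})$ with $b - a \notin S_n$; for each such pair an induced path through $\{0, a, b\}$ must be exhibited with no chords. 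For example, in the base case $n=11$ one can take $S_{11}=\{\pm 1, \pm 4\}$: a short case analysis (simplified using the automorphism $x \mapsto -x$) shows that every independent triple is witnessed by an explicit induced path, e.g.\ $\{0, 3, 8\}$ lies on the induced path $3, 10, 0, 1, 8$ and $\{0, 2, 5\}$ on $0, 4, 5, 6, 2$.

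The main obstacle is finding a family $S_n$ that works for every $n$. A basic tension arises: the diameter-two condition forces $|S_n|$ to grow with $n$ (roughly $|S_n| = \Omega(\sqrt n)$), while sum-freeness becomes progressively harder to arrange as $|S_n|$ grows. For small $n$ a four-element $S_n = \{\pm a, \pm b\}$ suffices, but for $n$ beyond about $13$ no such set can cover $\mathbb{Z}_n \setminus \{0\}$ within distance two, so denser connection sets are required and their sum-freeness must be engineered carefully. I expect the eventual proof will rely either on a family whose description varies with $n \bmod k$ for moderate $k$ (each residue class handled by an explicit induced-path template that routes through non-neighbours of $0$ in a uniform way), or on a non-constructive probabilistic argument showing that a random symmetric sum-free subset of $\mathbb{Z}_n$ of appropriate density satisfies all three conditions with positive probability, from which an explicit $S_n$ can be extracted by derandomisation.
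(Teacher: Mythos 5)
The statement you are addressing is labelled a \emph{conjecture} in the paper: the authors offer no proof of it, only computer verification for small orders, and they explicitly leave it open. So there is no ``paper proof'' to compare against; the question is whether your proposal closes the gap the authors left, and it does not. What you have written is a correct and useful \emph{reduction} of the problem, not a proof. Your three necessary-and-sufficient conditions are right: $\mono(G)=2$ forces triangle-freeness, which for a circulant $C_n(S)$ with symmetric connection set is exactly sum-freeness of $S$; diameter two is $S\cup(S+S)\supseteq\mathbb{Z}_n\setminus\{0\}$; and by vertex-transitivity the induced-path condition need only be checked for triples containing $0$. Your counting observation that a symmetric $4$-element connection set cannot give diameter two beyond $n=13$ is also correct, and your spot checks for $C_{11}(\{\pm1,\pm4\})$ are valid (e.g.\ $3,10,0,1,8$ is indeed chordless and contains the independent triple $\{0,3,8\}$).

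The genuine gap is the one you name yourself: no family $S_n$ is exhibited for general $n$, and no argument is given that one exists. The two escape routes you sketch are both unsubstantiated. A ``template varying with $n\bmod k$'' is plausible but you give no candidate and no reason the induced-path verification would go through uniformly; the hardest of your three conditions is precisely the one that resists a clean algebraic formulation, since it quantifies over all independent triples and demands a chordless witness path for each. The probabilistic route is even less developed: a random symmetric sum-free set of density $\Theta(n^{-1/2})$ would need to satisfy the diameter condition \emph{and} the induced-path condition simultaneously, and you offer no estimate suggesting the relevant events have positive joint probability, nor any mechanism for derandomisation. As it stands the proposal would be a reasonable opening paragraph of a research note on the conjecture, but it proves nothing beyond $n=11$ (and even there only modulo an unfinished case analysis). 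Note also that the paper's own tripling construction in Lemma~\ref{diameter two construction} already settles \emph{existence} of order-$n$ graphs with $\mono=2$ and diameter $2$ for all $n\geq 11$; the entire content of the conjecture is the word ``circulant,'' and that is exactly the part your proposal does not reach.
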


	{\bf Acknowledgements}
	The authors thank Dr. Erskine for his help with the computational results used in this paper. The first author acknowledges funding by an LMS Early Career Fellowship (Project ECF-2021-27) and also thanks the Open University for an extension of funding.  The second author thanks the University of Kerala for providing JRF.


\end{document}